\newcommand{\R}{\mathbb R}
\newcommand{\Z}{\mathbb Z}
\newcommand{\Q}{\mathbb Q}
\newcommand{\N}{\mathbb N}
\newtheorem{Theorem}[equation]{Theorem}
\newtheorem{Proposition}[equation]{Proposition}
\newtheorem{Lemma}[equation]{Lemma}
\theoremstyle{definition}
\newtheorem{Definition}[equation]{Definition}
\theoremstyle{remark}
\newtheorem{Rmk}[equation]{Remark}
\title[Diophantine approximations with prescribed quality]{Finding simultaneous Diophantine approximations with prescribed quality}
\author[Wieb {\sc Bosma}]{{\sc Wieb} BOSMA}
\address{Wieb {\sc Bosma}\\
Mathematics\\
Radboud University Nijmegen\\
PO Box 9010, 6500 GL \mbox{Nijmegen}\\
 the Netherlands}
\email{bosma@math.ru.nl }
\author[Ionica {\sc Smeets}]{{\sc Ionica} SMEETS}
\address{Ionica {\sc Smeets}\\
Mathematical Institute\\
Leiden University\\
Niels Bohrweg 1, 2333 CA Leiden\\
 the Netherlands}
\email{ionica.smeets@gmail.com}
\begin{document}
\maketitle

\begin{abstract}
We give an algorithm that finds a sequence of approximations with Dirichlet coefficients bounded by a constant only depending on the dimension. The algorithm uses the LLL-algorithm for lattice basis reduction. We present a version of the algorithm that runs in polynomial time of the input.
\end{abstract}

\section{Introduction}
The regular continued fraction algorithm is a classical algorithm to find good approximations by rationals for a given number $a \in \R\setminus \Q$. Dirichlet~\cite{Di}  proved that for every $a \in \R\setminus \Q$ there are infinitely many integers $q$ such that
\begin{equation}
\label{eq: dirichlet m=n=1}
\|q \,a \| < q^{-1},
\end{equation}
where $\|x\|$ denotes the distance between $x$ and the nearest integer. The exponent $-1$ of $q$ is minimal; if it is replaced by any number $x<-1$, then there exist $a \in \R \setminus \Q$ such that only finitely many integers $q$ satisfy $||q\,a|| < q^{x}$.

Hurwitz \cite{H} proved that the continued fraction algorithm finds, for every $a \in \R\setminus \Q$, an infinite sequence of increasing integers $q_n$ with
$$
\|q_n \, a  \| < \frac{1}{\sqrt{5}} \, q_n^{-1}.
$$
This result is sharp:  if the constant $ \frac{1}{\sqrt{5}} $ is replaced by any smaller one, then the statement becomes incorrect. Legendre~\cite{L} showed that the continued fraction algorithm finds all good approximations, in the sense that if 
$$
\|q  \,a \| <\frac12 \, q^{-1},
$$
then $q$ is one of the $q_n$ found by the algorithm.

As to the generalization of approximations in higher dimensions Dirichlet proved the following theorem;  See Chapter II  of~\cite{Schmidt}.
\begin{Theorem}
\label{th: ILLL diri}
Let an $n \times m$ matrix $A$ with entries $a_{ij} \in \R \setminus \Q$ be given and  suppose that $1, a_{i1} ,\dots, a_{im}$ are linearly independent over $\Q$ for some $i$ with 
$1 \leq i \leq n$. There exist infinitely many coprime $m$-tuples of integers $q_1,\dots,q_m$ such that with\\ \mbox{$q = \displaystyle{\max_j |q_j|}\geq 1$}, we have
\begin{equation}
\max_i \| q_1 \,a_{i1} + \dots + q_m \,a_{im} \|  < q^\frac{-m}{n}. 
\end{equation}
The exponent $\frac{-m}{n}$ of $q$ is minimal.
\end{Theorem}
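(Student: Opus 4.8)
The plan is to prove this by Dirichlet's box principle; the quantitative statement is entirely elementary, and essentially all of the subtlety sits in the coprimality clause. Throughout write $\lambda_i(\mathbf q)=q_1a_{i1}+\dots+q_ma_{im}$ for $\mathbf q=(q_1,\dots,q_m)\in\Z^m$. \emph{One solution for every scale.} Fix a large integer $k$ and put $Q=\lfloor k^{n/m}\rfloor$, so that $Q+1>k^{n/m}$ and hence $(Q+1)^m>k^n$. To each tuple $\mathbf q$ with $0\le q_j\le Q$ associate the point $\bigl(\{\lambda_1(\mathbf q)\},\dots,\{\lambda_n(\mathbf q)\}\bigr)\in[0,1)^n$, where $\{x\}$ is the fractional part. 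There are $(Q+1)^m$ such tuples but only $k^n$ half-open subcubes of side $1/k$ partitioning $[0,1)^n$, so two distinct tuples $\mathbf q',\mathbf q''$ give points in a common subcube. Then $\mathbf q:=\mathbf q'-\mathbf q''$ is nonzero, $q:=\max_j|q_j|\le Q\le k^{n/m}$, and for each $i$ the reals $\lambda_i(\mathbf q')$ and $\lambda_i(\mathbf q'')$ lie in an interval of length $1/k$, so $\|\lambda_i(\mathbf q)\|<1/k\le q^{-m/n}$. Taking the maximum over $i$ produces, for every $k$, a nonzero $\mathbf q$ with $\max_i\|\lambda_i(\mathbf q)\|<q^{-m/n}$.

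\emph{From one to infinitely many.} Here I would use the hypothesis: for the distinguished $i$ with $1,a_{i1},\dots,a_{im}$ linearly independent over $\Q$, the number $\lambda_i(\mathbf q)$ is irrational for every nonzero $\mathbf q$, so $\max_i\|\lambda_i(\mathbf q)\|>0$ on the set of solutions. If that set were finite this quantity would attain a positive minimum $\delta$ on it; running the box argument with $k>1/\delta$ then produces a solution with $\max_i\|\lambda_i(\mathbf q)\|<1/k<\delta$, hence not already listed — a contradiction. So there are infinitely many solutions, and the value $q$ is unbounded among them.

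\emph{Coprimality, and the main obstacle.} Given a solution $\mathbf q$ and a prime $\ell\mid\gcd(q_1,\dots,q_m)$: if $\ell$ also divides every nearest integer $p_i$ to $\lambda_i(\mathbf q)$, then $\mathbf q/\ell$ is again a solution, since its defect $\max_i\|\lambda_i(\mathbf q/\ell)\|$ is at most $\ell^{-1-m/n}$ times the old one. Iterating reduces to the case $\gcd(q_1,\dots,q_m,p_1,\dots,p_n)=1$, but a common prime factor of the $q_j$ that is \emph{not} shared by the $p_i$ can survive, and such a solution need not reduce at all. I expect this to be the hard point: one cannot fix a single non-primitive solution, but must instead show that infinitely many of the solutions constructed above are themselves primitive — the natural route being a count of \emph{primitive} lattice points in a symmetric convex box built from the inequalities $|q_j|\le Q$ and $\max_i|\lambda_i(\mathbf q)-p_i|<\epsilon$ with $Q^m\epsilon^n>1$, arranged so that the ``infinitely many'' and ``coprime'' conclusions hold simultaneously.

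\emph{Minimality of the exponent $-m/n$.} Finally, to rule out any exponent $x<-m/n$, I would argue measure-theoretically. For a fixed nonzero $\mathbf q$ with $\max_j|q_j|=q$, the set of matrices $A$ (with entries taken in $[0,1)$ under Lebesgue measure) for which $\max_i\|\lambda_i(\mathbf q)\|<q^{x}$ has measure $(2q^{x})^n$, because each row contributes an independent uniform residue modulo $1$. Since there are $O(q^{m-1})$ tuples with $\max_j|q_j|=q$, these measures sum to $\sum_q O(q^{m-1+nx})<\infty$ precisely when $x<-m/n$; by the Borel–Cantelli lemma, for almost every $A$ only finitely many $\mathbf q$ satisfy the improved inequality, and almost every such $A$ also meets the linear-independence hypothesis. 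Hence the exponent $-m/n$ is best possible.
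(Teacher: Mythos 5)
The paper does not prove this theorem; it is quoted from Chapter~II of Schmidt's book, so there is no internal proof to compare against. Your argument is essentially the classical one. The pigeonhole step is correct: with $Q=\lfloor k^{n/m}\rfloor$ you have $(Q+1)^m>k^n$, the difference of two tuples landing in the same subcube gives $q\leq k^{n/m}$ and $\|\lambda_i(\mathbf q)\|<1/k\leq q^{-m/n}$, and the passage from one solution per scale to infinitely many correctly uses the hypothesis that $\lambda_{i}(\mathbf q)$ is irrational for the distinguished row, hence $\max_i\|\lambda_i(\mathbf q)\|>0$ for every nonzero $\mathbf q$. Your Borel--Cantelli argument for the minimality of the exponent is also correct (for fixed $\mathbf q\neq0$ the residues of the $n$ rows are independent and uniform, the measures sum to $\sum_q O(q^{m-1+nx})$, which converges exactly when $x<-m/n$, and the excluded sets of matrices have measure zero); this is a legitimate alternative to the route usually taken in the literature, which exhibits explicit badly approximable systems (e.g.\ from algebraic number fields) and thereby gets the stronger conclusion that even the constant cannot be improved for those systems, not just the exponent.

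The one genuine gap is the coprimality clause, which you flag honestly but do not close. Two remarks. First, the clause as transcribed in the paper is almost certainly a misstatement: for $m=1$ a ``coprime $1$-tuple'' forces $q_1=\pm1$, which is absurd, and in Schmidt the coprimality refers to the full integer vector $(q_1,\dots,q_m,p_1,\dots,p_n)$. For that version your own reduction already suffices: if $\ell$ divides all the $q_j$ \emph{and} all the $p_i$, then dividing by $\ell$ shrinks the defect by $\ell^{-1}$ while the threshold $q^{-m/n}$ relaxes by $\ell^{m/n}$, so the reduced vector is again a solution; since the defects of your solutions tend to $0$ and each fixed primitive vector has a fixed positive defect, infinitely many distinct primitive vectors arise. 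Second, if one insists on $\gcd(q_1,\dots,q_m)=1$ literally (only meaningful for $m\geq2$), your observation that a prime dividing the $q_j$ but not all the $p_i$ blocks the reduction is correct, and the counting of primitive lattice points you gesture at would indeed be needed; as it stands that part is a sketch, not a proof.
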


\begin{Definition}
\label{def: dirichlet quality}
Let an $n \times m$ matrix $A$ with entries $a_{ij} \in \R \setminus \Q$  be given. The \emph{Dirichlet coefficient} of an $m$-tuple $q_1,\dots,q_m$ is defined as 
$$ 
q^\frac{m}{n} \displaystyle{ \max_i \| q_1 \,a_{i1} + \dots + q_m \,a_{im} \| }\,.
$$ 
 \end{Definition}


%


The proof of the theorem does not give an efficient way of finding a series of approximations with a Dirichlet coefficient of 1. For the case $m=1$ the first multi-dimensional continued fraction algorithm was given by Jacobi~\cite{Jacobi}. Many more followed, see for instance Perron~\cite{Perron}, Brun~\cite{Brun1}, Lagarias~\cite{Lag} and Just~\cite{Just}.  Brentjes~\cite{Brentjes} gives a detailed history and description of such algorithms. Schweiger's book~\cite{Schweiger} gives a broad overview. For $n=1$ there is amongst others the algorithm by Ferguson and Forcade~\cite{FF}.  However, there is no efficient algorithm that guarantees to find a series of approximations with Dirichlet coefficient smaller than 1. In 1982 the LLL-algorithm for lattice basis reduction was published in~\cite{LLL}. The authors noted that their algorithm could be used for finding Diophantine approximations of given rationals with Dirichlet coefficient only depending on the dimension; see~(\ref{eq: LLL quality}). 

Just~\cite{Just} developed an algorithm based on lattice reduction that detects $\Z$-linear dependence in the $a_i$, in this case $m=1$. If no such dependence is found her algorithm returns integers $q$ with 
\[\max_i \|qa_i \| \leq  c \left(\sum_{i=1}^n a_i^2\right)^{1/2} q^{-1/(2n(n-1))},\]
where $c$ is a constant depending on $n$. The exponent $-1/(2n(n-1))$ is larger than the Dirichlet exponent $-1/n$. 

Lagarias~\cite{Lag2}  used the LLL-algorithm in a series of lattices to find good approximations for the case $m=1$. Let $a_1,\dots,a_n \in \Q$ and let there be a \mbox{$Q \in \N$} with \mbox{$1 \leq Q \leq N$} such that $\displaystyle{\max_j || Q \, a_j|| <\varepsilon}$. Then Lagarias' algorithm on input $a_1,\dots,a_n$ and $N$ finds in polynomial time a $q$ with $1 \leq q \leq 2^\frac{n}{2}N$ such that $\displaystyle{\max_j ||q \, a_j|| \leq \sqrt{5n}2^\frac{d-1}{2}\varepsilon}$. The main difference with our work is that Lagarias focuses on the quality $ ||q \, a_j|| $, while we focus on Dirichlet coefficient $ q^\frac{1}{n} ||q \, a_j||$. Besides that we also consider the case $m>1$. 

The main result of the present paper is an algorithm that by iterating the LLL-algorithm gives a series of approximations of given rationals with optimal Dirichlet exponent. Where the LLL-algorithm gives one approximation our dynamic algorithm gives a series of successive approximations. To be more precise: For a given $n \times m$-matrix $A$ with entries $a_{ij} \in \Q$ and a given upper bound $q_{\max}$  the algorithm returns a sequence of $m$-tuples $q_1,\dots,q_m$ such that for every $Q$ with $2^\frac{(m+n+3)(m+n)}{4m}\leq Q \leq q_{\max}$ one of these $m$-tuples satisfies 
\begin{eqnarray*}
&&\max_j |q_j| \leq Q \textrm{ and }\\
&&\max_i \|q_1 a_{i1} + \dots +  q_m a_{im}   \| \leq 2^{\frac{(m+n+3)(m+n)}{4n}} Q^\frac{-m}{n}.
\end{eqnarray*}

The exponent $-m/n$ of $Q$ can not be improved and therefore we say that these approximations have optimal Dirichlet exponent.

Our algorithm is a multi-dimensional continued fraction algorithm in the sense that we work in a lattice basis and that we
only interchange basis vectors and add integer multiples of basis vectors to  another. Our algorithm differs from other multi-dimensional continued fraction algorithms in that the lattice is not fixed across the iterations. 

In Lemma~\ref{lem: what we find} we show that if there exists an extremely good approximation, our algorithm finds a very good one. We derive in Theorem~\ref{th: what you find} how the output of our algorithm gives a lower bound on the quality of possible approximations with coefficients up to a certain limit. If the lower bound is positive this proves that there do not exist linear dependencies with all coefficients $q_i$ below the limit. 

In Section~\ref{sec: polynomial} we show that a slightly modified version of our algorithm runs in polynomial time. In Section~\ref{sec: numerical} we present some numerical data.

\section{Lattice reduction and the LLL-algorithm}
In this section we give the definitions and results that we need for our algorithm.

Let $r$ be a positive integer. A subset $L$ of the $r$-dimensional real vector space $\R^r$ is called a \emph{lattice} if there exists a basis $b_1,\dots,b_r$ of $\R^r$ such that
$$
L = \sum_{i=1}^r \Z b_i = \left\{ \sum_{i=1}^r z_i b_i; z_i \in \Z \, (1 \leq i \leq r)\right\}.
$$
We say that $b_1,\dots,b_r$ is a \emph{basis} for $L$. The \emph{determinant} of the lattice $L$ is defined by $| \det(b_1,\dots,b_r)|$ and we denote it as $\det(L)$.

For any linearly independent $b_1,\dots,b_r \in \R^r$ the Gram-Schmidt process yields an orthogonal basis $b_1^*,\dots,b_r^*$ for $\R^r$, by defining inductively
\begin{eqnarray}
\label{eq: gram schmidt b} &&b_i^* = b_i - \sum_{j=1}^{i-1} \mu_{i  j} b_j^* \quad \textrm{for } 1 \leq i \leq r \quad \textrm{and}\\
\nonumber &&\mu_{i  j} = \frac{(b_i,b_j^*)}{(b_j^*,b_j^*)},
\end{eqnarray}
where $(\,,)$ denotes the ordinary inner product on $\R^r$.

We call a basis $b_1,\dots,b_r$ for a lattice $L$ \emph{reduced} if
$$
|\mu_{ij}| \leq \frac12 \quad \textrm{for } 1 \leq j < i \leq r \\
$$ 
and 
$$
|b_i^* + \mu_{i i-1}b_{i-1}^*|^2  \leq  \frac{3}{4} |b_{i-1}^*|^2 \quad \textrm{ for } 1 \leq i \leq r,
 $$
where $|x|$ denotes the Euclidean length of $x$.

The following two propositions were proven in~\cite{LLL}.

\begin{Proposition}
\label{prop: LLL b1}
Let $b_1,\dots,b_r$ be a reduced basis for a lattice $L$ in $\R^r$. Then we have
\begin{eqnarray*}
&(i)& |b_1| \leq 2^{(r-1)/4}\, \bigl( \det(L) \bigr)^{1/r}, \quad \\
&(ii)& |b_1|^2 \leq 2^{r-1} \, |x|^2, \quad \textrm{for every } x \in L, x \neq 0,\\
&(iii)& \prod_{i=1}^r |b_i| \leq 2^{r(r-1)/4} \det(L).
\end{eqnarray*}
\end{Proposition}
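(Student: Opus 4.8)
The plan is to derive all three estimates from a single consequence of the reduction conditions: a geometric comparison of the successive Gram--Schmidt norms. First I would observe that, since $b_i^*$ and $b_{i-1}^*$ are orthogonal, $|b_i^* + \mu_{i\,i-1}b_{i-1}^*|^2 = |b_i^*|^2 + \mu_{i\,i-1}^2|b_{i-1}^*|^2$, so the second reduction condition reads $|b_i^*|^2 \le \bigl(\tfrac34 - \mu_{i\,i-1}^2\bigr)|b_{i-1}^*|^2$; combined with $|\mu_{i\,i-1}| \le \tfrac12$ this gives $|b_{i-1}^*|^2 \le 2|b_i^*|^2$ and hence, by iteration,
\[
|b_j^*|^2 \le 2^{\,i-j}\,|b_i^*|^2 \qquad (1 \le j \le i \le r).
\]
This is the only place the reduction hypotheses enter. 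I would also record the standard fact that $\det(L) = \prod_{i=1}^r |b_i^*|$, which follows because the Gram--Schmidt change of basis is unipotent triangular, and that $|b_1| = |b_1^*|$.

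For part (i): applying the displayed inequality with $j=1$ for each $i$ and multiplying over $i = 1,\dots,r$ gives $|b_1|^{2r} \le 2^{\,r(r-1)/2}\det(L)^2$, and taking $2r$-th roots yields $(i)$. For part (iii): I would bound each basis vector by its Gram--Schmidt components, $|b_i|^2 = |b_i^*|^2 + \sum_{j<i}\mu_{ij}^2|b_j^*|^2 \le |b_i^*|^2\bigl(1 + \tfrac14\sum_{j=1}^{i-1}2^{\,i-j}\bigr)$, sum the geometric series and use $1 + \tfrac14(2^i-2) \le 2^{\,i-1}$ to get $|b_i|^2 \le 2^{\,i-1}|b_i^*|^2$; multiplying over $i$ again produces $\prod_i |b_i|^2 \le 2^{\,r(r-1)/2}\det(L)^2$, which is $(iii)$.

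For part (ii): write a nonzero $x \in L$ as $x = \sum_{i=1}^r z_i b_i$ with $z_i \in \Z$ and let $k$ be the largest index with $z_k \neq 0$. Expanding in the Gram--Schmidt basis, the coefficient of $b_k^*$ in $x$ is exactly $z_k$ (the vectors $b_i$ with $i > k$ do not appear, and those with $i < k$ contribute nothing to $b_k^*$), so $|x|^2 \ge z_k^2\,|b_k^*|^2 \ge |b_k^*|^2 \ge 2^{-(k-1)}|b_1^*|^2 \ge 2^{-(r-1)}|b_1|^2$, which is $(ii)$.

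None of the steps is genuinely difficult; this is essentially the argument of \cite{LLL}. The only points requiring care are the passage from the raw reduction conditions to the clean bound $|b_{i-1}^*|^2 \le 2|b_i^*|^2$ (where orthogonality of the $b_i^*$ and the estimate $\mu_{i\,i-1}^2 \le \tfrac14$ must be combined correctly) and the elementary inequality $1 + \tfrac14(2^i - 2) \le 2^{\,i-1}$ for $i \ge 1$ used in part (iii); the rest is bookkeeping with products of the $|b_i^*|$.
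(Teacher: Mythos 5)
The paper itself gives no proof of this proposition; it simply cites \cite{LLL}, and your argument is the standard one from that source: everything is reduced to the two facts $|b_j^*|^2 \le 2^{\,i-j}|b_i^*|^2$ and $\det(L)=\prod_i |b_i^*|$. Parts (i), (ii) and (iii) are all handled correctly --- the telescoping products, the bound $|b_i|^2 \le 2^{\,i-1}|b_i^*|^2$ via the geometric series, and the observation that the $b_k^*$-coefficient of $x$ is the nonzero integer $z_k$ are exactly the steps of \cite{LLL}.

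One step as written is a non sequitur, though the fault originates in the paper's statement of the reduction condition rather than with you. You say the second reduction condition reads $|b_i^*|^2 \le (\tfrac34-\mu_{i\,i-1}^2)\,|b_{i-1}^*|^2$ and then deduce $|b_{i-1}^*|^2 \le 2|b_i^*|^2$; but an upper bound on $|b_i^*|^2$ cannot produce an upper bound on $|b_{i-1}^*|^2$ --- the implication points the wrong way. The condition as printed in the paper, with ``$\le \tfrac34 |b_{i-1}^*|^2$'', is a typo for the Lov\'asz condition of \cite{LLL}, which requires $|b_i^* + \mu_{i\,i-1}b_{i-1}^*|^2 \ge \tfrac34 |b_{i-1}^*|^2$ (and only for $2 \le i \le r$, since $b_0^*$ does not exist); with the printed $\le$ the proposition is actually false, since part (i) fails whenever $|b_1^*|$ greatly exceeds the later $|b_i^*|$, which that version of the condition permits. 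With the correct $\ge$, your chain becomes $|b_i^*|^2 \ge (\tfrac34-\mu_{i\,i-1}^2)|b_{i-1}^*|^2 \ge \tfrac12 |b_{i-1}^*|^2$, which is precisely the inequality you go on to use, and the remainder of your proof is sound as it stands.
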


\begin{Proposition}
\label{prop: LLL complexity}
Let $L \subset \Z^r$ be a lattice with a basis $b_1,b_2,\ldots,b_r$, and let $F \in \R$, $F \geq 2$, be such that $|b_i|^2 \leq F$ for $1 \leq i \leq r$. Then the number of arithmetic operations needed by the LLL-algorithm is $O(r^4 \log F)$ and the integers on which these operations are performed each have binary length $O(r \log F)$.
\end{Proposition}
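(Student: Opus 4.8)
The plan is to follow the classical analysis of~\cite{LLL}, tracking a single integer-valued potential that drops geometrically at every basis interchange. For $1 \le i \le r$ set $d_i = \prod_{j=1}^{i} |b_j^*|^2$; since $d_i$ equals the Gram determinant $\det\bigl( (b_k, b_\ell) \bigr)_{1 \le k,\ell \le i}$ and the current basis vectors stay in $\Z^r$ throughout the algorithm, each $d_i$ is a positive integer, so $d_i \ge 1$ at all times. Put $D = \prod_{i=1}^{r-1} d_i$. At the start $d_i \le \prod_{j=1}^i |b_j|^2 \le F^i$ (because $|b_j^*| \le |b_j|$), hence $D \le F^{r(r-1)/2}$, while always $D \ge 1$.

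Next I would check how the two kinds of steps of the LLL-algorithm affect $D$. A size-reduction step replaces some $b_k$ by $b_k - c\, b_j$ with $c \in \Z$, $j < k$; by~(\ref{eq: gram schmidt b}) this leaves every $b_i^*$, hence every $d_i$ and $D$, unchanged. An interchange step swaps $b_{k-1}$ and $b_k$; this alters only $b_{k-1}^*$ and $b_k^*$, hence only $d_{k-1}$, and failure of the Lovász condition $|b_k^* + \mu_{k\,k-1} b_{k-1}^*|^2 \ge \tfrac34 |b_{k-1}^*|^2$ shows the new $d_{k-1}$ is less than $\tfrac34$ times the old one, so $D$ is multiplied by a factor $< \tfrac34$. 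Therefore the number of interchanges is at most $\log_{4/3}\bigl(F^{r(r-1)/2}\bigr) = O(r^2 \log F)$, which in particular proves termination. In each pass through the main loop the index $k$ either increases by $1$ (no interchange) or decreases by $1$ (interchange), and $1 \le k \le r$ always, so the number of passes is at most twice the number of interchanges plus $r$, i.e.\ $O(r^2 \log F)$.

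I would then bound the cost of one pass: a full size reduction of $b_k$ against $b_{k-1}, \dots, b_1$, at most one interchange, and the induced updates of the $\mu_{ij}$ and the $b_i^*$, altogether $O(r^2)$ arithmetic operations on the rationals $\mu_{ij}$, the integer vectors $b_i$, and the numerators and denominators of the $b_i^*$. Multiplying by the number of passes gives the claimed $O(r^4 \log F)$ arithmetic operations.

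Finally, for the bit-length claim I would bound the integers that actually occur. The relevant denominators are governed by the $d_i$, which remain integers with $1 \le d_i \le F^{r}$ throughout (the bound $d_i \le F^i$ holds initially and $d_i$ only decreases when it changes); thus every $\mu_{ij}$ and every coordinate of every $b_i^*$ is a rational with denominator dividing some $d_\ell$, so of binary length $O(r \log F)$, and, using $|d_{i-1} b_i^*|^2 = d_{i-1}^2 |b_i^*|^2 \le F^{r+1}$, the same bound holds for the corresponding numerators. After size reduction $|\mu_{ij}| \le \tfrac12$ and $|b_i^*|^2 \le F$, whence $|b_i|^2 \le F + \tfrac14\sum_{j<i}|b_j^*|^2 \le \tfrac{r}{4}F + F$, so the reduced $b_i$ have coordinates of binary length $O(\log F + \log r)$ as well. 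The one point that needs genuine care — and the main technical obstacle — is to show that the \emph{intermediate} integers produced \emph{during} the size-reduction loop, before $b_k$ has been fully reduced, do not blow up; this is handled as in~\cite{LLL} by rewriting the partial reductions in terms of the $\mu_{ij}$ and the $d_i$ and checking that every quantity stays bounded by a fixed power of $F$ times a polynomial in $r$. Combining these estimates yields the bound $O(r \log F)$ on the binary length of every integer manipulated by the algorithm.
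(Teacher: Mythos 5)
The paper offers no proof of this proposition; it is quoted verbatim from the original LLL paper (Proposition 1.26 of~\cite{LLL}) with only a citation. Your argument is precisely the standard one from that source --- the integer potential $D=\prod_{i<r} d_i$ bounded between $1$ and $F^{r(r-1)/2}$ and shrinking by a factor $\tfrac34$ at each interchange, $O(r^2)$ operations per pass, and the $d_i$ controlling denominators and bit lengths --- and it is essentially correct. Two small remarks: your bound $d_{i-1}^2|b_i^*|^2\le F^{r+1}$ should read $d_{i-1}^2|b_i^*|^2=d_{i-1}d_i\le F^{2r}$ (harmless, still $O(r\log F)$ bits), and the one genuinely delicate point --- the size of intermediate integers during the size-reduction loop --- is deferred to~\cite{LLL} rather than proved, which you flag honestly; this matches the level of detail the paper itself provides.
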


In the following Lemma the approach suggested in the original LLL-paper for finding (simultaneous) Diophantine approximations is generalized to the case $m >1$.
\begin{Lemma}
\label{lem: LLL setup}
Let an $n \times m$-matrix $A$ with entries $a_{ij}$ in $\R$ and an $\varepsilon \in (0,1)$ be given. Applying the LLL-algorithm to the basis formed by the columns of the $(m+n) \times (m+n)$-matrix
\begin{equation}
\label{eq:lll begin linear system}
B=
\left[\begin{array}{ccccccc}
1 & 0 & \dots & 0 & a_{11} & \dots & a_{1m}\\
0 & 1 & \ddots & 0 & a_{21} & \dots & a_{2m}\\
\vdots &  & & \vdots & \vdots & & \vdots\\
0 & \dots & 0& 1 & a_{n1} & \dots & a_{nm}\\
0 & \dots &0 & 0 &c &  & 0\\
\vdots &  &\vdots & \vdots & & \ddots & \\
0 & \dots &0 & 0 &0 &  & c\\
\end{array}\right],\end{equation}
with $c = \left( 2^{-\frac{m+n-1}{4}} \varepsilon  \right)^\frac{m+n}{m}$ yields an $m$-tuple $q_1,\dots,q_m \in \Q$ with
\begin{eqnarray}
\max_j |q_j| &\leq&2^\frac{(m+n-1)(m+n)}{4m}\varepsilon^\frac{-n}{m}  \label{eq: bound q lll}
 \textrm{ and}\\
\max_i \| q_1 a_{i1} + \dots + q_m a_{im}\| &\leq&\varepsilon  \label{eq: bound quality lll}.
\end{eqnarray}
\end{Lemma}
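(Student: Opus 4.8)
The plan is to apply Proposition~\ref{prop: LLL b1} to the lattice $L\subset\R^{m+n}$ spanned by the columns of $B$, and then to read off the two claimed inequalities from the shape of the short vector $b_1$ that the LLL-algorithm returns. First I would compute $\det(L)$: since $B$ is upper triangular up to the block structure, the determinant is simply the product of the diagonal entries, namely $1^n\cdot c^m=c^m$. A generic element of $L$ is obtained by taking an integer combination of the columns; writing the multipliers of the last $m$ columns as $q_1,\dots,q_m$ and the multipliers of the first $n$ columns as $p_1,\dots,p_n$, the resulting vector has first $n$ coordinates $p_i+\sum_j q_j a_{ij}$ and last $m$ coordinates $c\,q_j$. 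So a short lattice vector forces each $p_i+\sum_j q_j a_{ij}$ to be small in absolute value and each $c\,q_j$ to be small; in particular, choosing $p_i$ to be the nearest integer to $-\sum_j q_j a_{ij}$, the first block of coordinates realizes $\|q_1 a_{i1}+\dots+q_m a_{im}\|$.

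Next I would run the LLL-algorithm and look at $b_1$. By part~(i) of Proposition~\ref{prop: LLL b1},
\[
|b_1|\le 2^{(m+n-1)/4}\,\bigl(\det(L)\bigr)^{1/(m+n)}=2^{(m+n-1)/4}\,c^{m/(m+n)}.
\]
Substituting $c=\bigl(2^{-(m+n-1)/4}\varepsilon\bigr)^{(m+n)/m}$ gives $c^{m/(m+n)}=2^{-(m+n-1)/4}\varepsilon$, so the two powers of $2$ cancel and we get the clean bound $|b_1|\le\varepsilon$. Now write $b_1$ in terms of its integer coordinates $q_1,\dots,q_m$ (multipliers of the $a$-columns) and $p_1,\dots,p_n$ as above. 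Every coordinate of $b_1$ has absolute value at most $|b_1|\le\varepsilon$. Looking at the last $m$ coordinates, $|c\,q_j|\le\varepsilon$, hence $|q_j|\le\varepsilon/c$; and $\varepsilon/c=\varepsilon\cdot\bigl(2^{-(m+n-1)/4}\varepsilon\bigr)^{-(m+n)/m}=2^{(m+n-1)(m+n)/(4m)}\varepsilon^{1-(m+n)/m}=2^{(m+n-1)(m+n)/(4m)}\varepsilon^{-n/m}$, which is exactly~(\ref{eq: bound q lll}). Looking at the first $n$ coordinates, $|p_i+\sum_j q_j a_{ij}|\le\varepsilon$ for each $i$; since $p_i\in\Z$ this gives $\|q_1 a_{i1}+\dots+q_m a_{im}\|\le\varepsilon$, which is~(\ref{eq: bound quality lll}).

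One point that needs a remark rather than real work: I should note that the $q_j$ do indeed lie in $\Q$ (rather than $\R$) because they are integers produced by LLL — the rationality in the statement refers to the case where the $a_{ij}$ themselves are rational and one clears denominators, but here the $q_j$ are literally integer multipliers, so the claim is immediate; in any case $q_j\in\Z\subset\Q$. The only genuine subtlety is that $b_1$ need not have a nonzero last block, i.e. one could in principle have $q_1=\dots=q_m=0$; but then $b_1$ would be a nonzero integer combination of the first $n$ standard-like columns, forcing $|b_1|\ge 1>\varepsilon$ (as $\varepsilon\in(0,1)$ and $c<1$, so $|b_1|\le\varepsilon<1$), a contradiction — so the returned tuple is automatically nontrivial. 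This small non-triviality check, together with the arithmetic of substituting the value of $c$, is the only place that requires care; the rest is a direct unwinding of Proposition~\ref{prop: LLL b1}(i).
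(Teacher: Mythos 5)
Your proposal is correct and follows essentially the same route as the paper: compute $\det(L)=c^m$, apply Proposition~\ref{prop: LLL b1}(i) to bound $|b_1|$, read the integers $q_j$ and $p_i$ off the coordinates of $b_1$, and substitute the value of $c$. The only differences are cosmetic (you substitute $c$ before rather than after extracting the coordinate bounds) plus the added non-triviality check that not all $q_j$ vanish, which the paper omits but which is a harmless and correct refinement.
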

\begin{proof}

The LLL-algorithm finds a reduced basis $b_1,\dots,b_{m+n}$ for this lattice. Each vector in this basis can be written as
$$ 
\left[\begin{array}{c}
q_1a_{11}+ \dots + q_m a_{1m} - p_1\\
\vdots\\
q_1a_{n1}+ \dots + q_m a_{nm} - p_n\\
cq_1\\
\vdots \\
c q_m
\end{array}
\right],
$$
with $ p_i \in \Z$ for $1\leq i \leq n$  and $ q_j \in \Z$ for $1\leq j \leq m$.

Proposition~\ref{prop: LLL b1}(i) gives an upper bound for the length of the first basis vector,
\[|b_1| \leq 2^\frac{m+n-1}{4}c^\frac{m}{m+n}.\]
From this vector $b_1$ we find integers $q_1,\dots,q_m$, such that
\begin{eqnarray}
\label{eq: q in c}
\max_j |q_j| &\leq&2^\frac{m+n-1}{4}c^\frac{-n}{m+n}  \textrm{ and } \\
\label{eq: quality in c}
\max_i \| q_1 a_{i1} + \dots + q_m a_{im}\| &\leq& 2^\frac{m+n-1}{4}c^\frac{m}{m+n} .
\end{eqnarray}
Substituting $c = \left( 2^{-\frac{m+n-1}{4}} \varepsilon  \right)^{\frac{m+n}{m}}$ gives the results.
\end{proof}

From equations~(\ref{eq: q in c}) and~(\ref{eq: quality in c}) it easily follows that the $m$-tuple $q_1,\dots,q_m$ satisfies
\begin{equation}
\label{eq: LLL quality}
\max_i \|q_1 a_{i1} + \dots + q_m a_{im}\| \leq 2^\frac{(m+n-1)(m+n)}{4n}q^\frac{-m}{n},
\end{equation}
where $q = \displaystyle{\max_j |q_j|}$, so the approximation has a Dirichlet coefficient of at most $2^\frac{(m+n-1)(m+n)}{4n}$. 

\section{The Iterated LLL-algorithm}
\label{sec: algorithm}

We iterate the LLL-algorithm over a series of lattices to find a sequence of approximations. We start with a lattice determined by a basis of the form~(\ref{eq:lll begin linear system}). After the LLL-algorithm finds a reduced basis for this lattice, we decrease the constant $c$ by dividing the last $m$ rows of the matrix by a constant greater than 1. By doing so, $\varepsilon$ is divided by this constant to the power~$\frac{m}{m+n}$. We repeat this process until the upper bound~(\ref{eq: bound q lll}) for $q$ guaranteed by the LLL-algorithm exceeds a given upper bound $q_{\max}.$  Motivated by the independence on $\varepsilon$ of~(\ref{eq: LLL quality}) we ease notation by fixing  $\varepsilon=\frac12$.

Define
\begin{equation}
\label{eq: number of steps}
k^\prime := \left\lceil    - \frac{(m+n-1)(m+n)}{4n} + \frac{m \log_2 q_{\max}}{n }   \right\rceil.
\end{equation}

\shadowbox
{ \begin{minipage}[c]{\textwidth}
\textbf{Iterated LLL-algorithm (ILLL)}\\

\textbf{Input}\\ 
An $n \times m$-matrix $A$ with entries $a_{ij}$ in $\R$.\\
An upper bound $q_{\max}>1$.\\

\textbf{Output}\\
For each integer $k$ with $1 \leq k \leq k^\prime$, see~(\ref{eq: number of steps}), we obtain a vector $q(k) \in \Z^m$ with 
\begin{eqnarray}
\label{eq: q k bound}
\max_j|q_j(k)| &\leq& 2^\frac{(m+n-1)(m+n)}{4m} \, 2^\frac{kn}{m},\\
\label{eq: quality k bound}
\max_i \|q_1(k) \,a_{i1} + \dots +  q_m(k) \, a_{im}   \| & \leq &  \frac{1}{2^k}.
\end{eqnarray}

 \textbf{Description of the algorithm}
\begin{enumerate}
\item Construct the basis matrix $B$ as given in~(\ref{eq:lll begin linear system}) from $A$.
\item Apply the LLL-algorithm to $B$.
\item Deduce $q_1,\dots,q_m$ from the first vector in the reduced basis returned by the LLL-algorithm. 
\item Divide the last $m$ rows of $B$ by $2^\frac{m+n}{m}$
\item Stop if the upper bound for $q$ guaranteed by the algorithm~(\ref{eq: q k bound}) exceeds  $q_{\max}$; else go to step 2. 
\end{enumerate}
\end{minipage}
} \\

\begin{Rmk}The number  $2^\frac{m+n}{m}$ in step 4 may be replaced by $ d^\frac{m+n}{m}$ for any real number $d>1$. When we additionally set $\varepsilon = \frac{1}{d} $ this yields 
 that
\begin{eqnarray}
\label{eq: q(k) in d}
\max_j |q_j(k)| &\leq&2^\frac{(m+n-1)(m+n)}{4m}d^\frac{kn}{m}  \quad \textrm{ and } \\
\label{eq: quality in d}
\max_i \| q_1(k) a_{i1} + \dots + q_m(k) a_{im}\| &<& d^{-k}.
\end{eqnarray}
In the theoretical part of this paper we always take $d = 2$ corresponding to our choice $\varepsilon=\frac12$.
 \end{Rmk}

\begin{Lemma}
\label{lemma: number steps }
Let an $n \times m$-matrix $A$ with entries $a_{ij}$ in $\R$ and an upper bound $q_{\max}>1$ be given. The number of times the ILLL-algorithm applies the LLL-algorithm on this input equals $k^\prime$ from~{\rm(\ref{eq: number of steps})}.
\end{Lemma}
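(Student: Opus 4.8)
The plan is to follow the value of the parameter $\varepsilon$ through the successive passes of the loop and to read off exactly when the guaranteed upper bound on $\max_j|q_j|$ first exceeds $q_{\max}$.

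First I would record the effect of one pass of the loop on the matrix $B$. After step~1 the basis $B$ is the matrix (\ref{eq:lll begin linear system}) with $\varepsilon=\tfrac12$, and, as already observed in the text, step~4 replaces the last $m$ rows by $2^{-(m+n)/m}$ times themselves, which has the same effect as replacing $\varepsilon$ by $\varepsilon/2$. Hence when step~2 is executed for the $k$-th time the matrix $B$ is of the form (\ref{eq:lll begin linear system}) with $\varepsilon=2^{-k}$. Applying Lemma~\ref{lem: LLL setup} with this value of $\varepsilon$ then shows that the vector $q(k)$ produced in step~3 satisfies precisely the bounds (\ref{eq: q k bound}) and (\ref{eq: quality k bound}); in particular, after the $k$-th pass the quantity ``the upper bound for $q$ guaranteed by the algorithm'' appearing in step~5 equals $2^{(m+n-1)(m+n)/(4m)}\,2^{kn/m}$.

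Next I would translate the stopping condition into an inequality for $k$. Taking base-$2$ logarithms,
\[
2^{\frac{(m+n-1)(m+n)}{4m}}\,2^{\frac{kn}{m}}\le q_{\max}
\quad\Longleftrightarrow\quad
k\le -\frac{(m+n-1)(m+n)}{4n}+\frac{m\log_2 q_{\max}}{n}.
\]
So this guaranteed bound is still at most $q_{\max}$ for every $k$ up to the largest integer satisfying the right-hand side, and it exceeds $q_{\max}$ for the next value of $k$. Since in the loop a further pass is carried out exactly when, after the previous pass, this bound has not yet exceeded $q_{\max}$, the last pass that is performed is the one whose index is one larger than that largest integer, which is precisely $k'$ as defined in (\ref{eq: number of steps}); the ceiling in (\ref{eq: number of steps}) is exactly what encodes ``largest admissible integer, plus one''. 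This yields the claim.

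I expect the only genuinely delicate point to be this last bookkeeping step: one must be careful about which value of $k$ the phrase ``the upper bound guaranteed by the algorithm'' in step~5 refers to relative to the division carried out in step~4, and about the rounding — in particular the borderline situation in which $-\tfrac{(m+n-1)(m+n)}{4n}+\tfrac{m\log_2 q_{\max}}{n}$ is itself an integer. Everything else is a direct substitution into Lemma~\ref{lem: LLL setup} together with the elementary manipulation of the displayed inequality above.
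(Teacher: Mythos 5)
Your proposal is correct and follows essentially the same route as the paper, whose entire proof consists of solving the stopping criterion $q_{\max}\le 2^{\frac{(m+n-1)(m+n)}{4m}}2^{\frac{kn}{m}}$ for $k$; you simply make explicit the bookkeeping (that the $k$-th pass runs with $\varepsilon=2^{-k}$, hence the bound $2^{\frac{(m+n-1)(m+n)}{4m}}2^{\frac{kn}{m}}$) that the paper leaves implicit. The borderline case you flag, where the expression inside the ceiling is an integer, is indeed glossed over by the paper as well, so your treatment is if anything more careful than the original.
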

\begin{proof}
One easily derives the number of times we iterate by solving $k$ from the stopping criterion~(\ref{eq: q k bound})
$$
q_{\max} \leq 2^\frac{(m+n-1)(m+n)}{4m}  2^\frac{kn}{m}, 
$$
\end{proof}
We define
\begin{equation*}
c(k) = c(k\!-\!1) / 2^{\frac{m+n}{m}} \textrm{ for } k >1, \quad \textrm{ where } c(1) = c \textrm{ as given in Lemma~\ref{lem: LLL setup}}.
\end{equation*}
In iteration $k$ we are working in the lattice defined by the basis in~(\ref{eq:lll begin linear system}) with $c$ replaced by $c(k)$. 

\begin{Lemma}
\label{lem kth result}
The $k$-th output, $q(k)$, of the ILLL-algorithm satisfies~{\rm(\ref{eq: q k bound})} and~{\rm(\ref{eq: quality k bound})}. 
\end{Lemma}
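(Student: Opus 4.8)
The plan is to recognize that the $k$-th iteration of the ILLL-algorithm is nothing but a single application of the LLL-algorithm in exactly the situation of Lemma~\ref{lem: LLL setup}, with $\varepsilon$ specialized to $\varepsilon(k) := 2^{-k}$. First I would establish, by induction on $k$, the closed form
\[
c(k) = \left( 2^{-\frac{m+n-1}{4}}\, 2^{-k} \right)^{\frac{m+n}{m}},
\]
starting from $c(1) = \left( 2^{-\frac{m+n-1}{4}}\cdot \tfrac12 \right)^{\frac{m+n}{m}}$ (the value of $c$ in Lemma~\ref{lem: LLL setup} for $\varepsilon = \tfrac12$) and using the recursion $c(k) = c(k-1)/2^{\frac{m+n}{m}}$. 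Each division by $2^{\frac{m+n}{m}}$ multiplies the base of the $\tfrac{m+n}{m}$-th power by $2^{-1}$, which is precisely the effect of replacing $\varepsilon(k-1)$ by $\varepsilon(k) = \varepsilon(k-1)/2$, so the induction closes immediately.

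Then I would invoke Lemma~\ref{lem: LLL setup} verbatim with $\varepsilon = \varepsilon(k)$. Since the basis matrix used in iteration $k$ is precisely~(\ref{eq:lll begin linear system}) with $c$ replaced by $c(k) = \left( 2^{-\frac{m+n-1}{4}}\varepsilon(k)\right)^{\frac{m+n}{m}}$, the first vector of the reduced basis produced by the LLL-algorithm yields integers $q_1(k),\dots,q_m(k)$ with $\max_j|q_j(k)| \le 2^{\frac{(m+n-1)(m+n)}{4m}}\varepsilon(k)^{-n/m}$ and $\max_i\|q_1(k)a_{i1}+\dots+q_m(k)a_{im}\| \le \varepsilon(k)$. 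Substituting $\varepsilon(k) = 2^{-k}$ turns these two inequalities into~(\ref{eq: q k bound}) and~(\ref{eq: quality k bound}) respectively, which is the assertion of the Lemma.

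The only point that needs genuine care — and the one I would write out explicitly rather than leave to the reader — is the verification that the scaling of the last $m$ rows in step 4 interacts correctly with the exponent $\tfrac{m+n}{m}$, so that after $k-1$ halvings one genuinely lands on the $c(k)$ displayed above; this is a one-line induction, but it is the place where a stray power of $2$ could slip in. Everything else is a direct quotation of Lemma~\ref{lem: LLL setup}, applied afresh to the scaled basis at each step, so no hypothesis beyond those already present is needed and I anticipate no further obstacle.
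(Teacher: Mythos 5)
Your proposal is correct and follows essentially the same route as the paper: the paper likewise writes $c(k)=\bigl(2^{-\frac{m+n+3}{4}-k+1}\bigr)^{\frac{m+n}{m}}$ (which is exactly your $\bigl(2^{-\frac{m+n-1}{4}}2^{-k}\bigr)^{\frac{m+n}{m}}$) and substitutes it into the bounds~(\ref{eq: q in c}) and~(\ref{eq: quality in c}) from the proof of Lemma~\ref{lem: LLL setup}, which is the same as invoking that lemma with $\varepsilon=2^{-k}$. Your explicit induction on the closed form of $c(k)$ only makes precise what the paper states without comment.
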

\begin{proof}

In step $k$ we use $c(k)=   \left( 2^{-\frac{m+n+3}{4}-k+1} \right)^\frac{m+n}{m}$. Substituting $c(k)$ for $c$ in equations~(\ref{eq: q in c}) and~(\ref{eq: quality in c}) yields~(\ref{eq: q k bound}) and~(\ref{eq: quality k bound}), respectively.
\end{proof}
The following theorem gives the main result mentioned in the introduction. The algorithm returns a sequence of approximations with all coefficients smaller than $Q$, optimal Dirichlet exponent and Dirichlet coefficient only depending on  the dimensions $m$ and $n\,$.

\begin{Theorem}
\label{th: for each Q}
Let an $n \times m$-matrix $A$ with entries $a_{ij}$ in $\R$, and $q_{\max} >1$  be given. The ILLL-algorithm finds a sequence of $m$-tuples $q_1,\dots,q_m$ such that for every $Q$ with $2^\frac{(m+n+3)(m+n)}{4m} \leq Q \leq q_{\max}$ one of these $m$-tuples satisfies 
\begin{eqnarray*}
&&\max_j |q_j| \leq Q \textrm{ and }\\
&&\max_i \|q_1 a_{i1} + \dots +  q_m a_{im}   \| \leq 2^{\frac{(m+n+3)(m+n)}{4n}} Q^\frac{-m}{n}.
\end{eqnarray*}
\end{Theorem}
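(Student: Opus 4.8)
The plan is to bootstrap from the per-step guarantee of Lemma~\ref{lem kth result}: for every $k$ with $1\le k\le k'$ the algorithm outputs a tuple $q(k)$ obeying \eqref{eq: q k bound} and \eqref{eq: quality k bound}, and by Lemma~\ref{lemma: number steps } all of these $k'$ tuples are actually produced. So, given $Q$ in the prescribed range (if no such $Q$ exists the statement is vacuous), it suffices to name a single index $k$ for which $q(k)$ meets both requirements.

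First I would record what each target inequality asks of $k$. Combining \eqref{eq: q k bound} with $\max_j|q_j(k)|\le Q$ and taking base-$2$ logarithms shows it is enough that
\[ k \;\le\; \frac{m\log_2 Q}{n}-\frac{(m+n-1)(m+n)}{4n}\;=:\;x_Q, \]
while combining \eqref{eq: quality k bound} with $2^{-k}\le 2^{(m+n+3)(m+n)/(4n)}Q^{-m/n}$ shows it is enough that
\[ k \;\ge\; \frac{m\log_2 Q}{n}-\frac{(m+n+3)(m+n)}{4n}\;=:\;y_Q. \]
The natural choice is $k:=\lfloor x_Q\rfloor$, and the whole proof then reduces to checking $\max(1,y_Q)\le k\le k'$ and reading the two displays backwards.

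The crux is the placement of $k$, and it rests on one elementary computation: $x_Q-y_Q=\dfrac{(m+n)\cdot 4}{4n}=\dfrac{m+n}{n}\ge 1$. Hence the window $[y_Q,x_Q]$ has length at least one, so it contains $\lfloor x_Q\rfloor$; in particular $k\ge y_Q$, which yields the quality bound. For $k\le k'$ I would invoke $Q\le q_{\max}$, which makes $x_Q\le x_{q_{\max}}$, and the definition \eqref{eq: number of steps}, giving $\lfloor x_Q\rfloor\le\lceil x_{q_{\max}}\rceil=k'$. For $k\ge 1$ I would feed in the hypothesis $Q\ge 2^{(m+n+3)(m+n)/(4m)}$: it forces $\log_2 Q\ge\frac{(m+n+3)(m+n)}{4m}$, hence $x_Q\ge\frac{(m+n+3)(m+n)}{4n}-\frac{(m+n-1)(m+n)}{4n}=\frac{m+n}{n}\ge 1$, so $\lfloor x_Q\rfloor\ge 1$. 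With $k$ now in range, \eqref{eq: q k bound} gives $\max_j|q_j(k)|\le Q$ and \eqref{eq: quality k bound} gives the stated Dirichlet bound.

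I do not expect a genuine obstacle here; the argument is entirely a matter of exponent bookkeeping. The only thing one must get right is that the single integer $\lfloor x_Q\rfloor$ lands simultaneously $\le x_Q$, $\ge y_Q$, $\ge 1$, and $\le k'$ — and this is exactly what the window-length identity $x_Q-y_Q=\frac{m+n}{n}\ge 1$ together with the lower bound hypothesis on $Q$ and the monotonicity in $q_{\max}$ supply.
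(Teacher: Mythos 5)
Your proof is correct and follows essentially the same route as the paper: the paper picks the iteration index $k$ via the sandwich $2^{\frac{(m+n+3)(m+n)}{4m}}2^{\frac{(k-1)n}{m}}\le Q<2^{\frac{(m+n+3)(m+n)}{4m}}2^{\frac{kn}{m}}$ (i.e.\ $k=\lfloor y_Q\rfloor+1$ in your notation) and then reads off the two bounds from Lemma~\ref{lem kth result}, exactly as you do with $k=\lfloor x_Q\rfloor$ and the window identity $x_Q-y_Q=\frac{m+n}{n}\ge 1$. Your explicit verification that $1\le k\le k'$ is a point the paper leaves implicit, but it is the same argument.
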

\begin{proof}
Take $k \in \N$ such that
\begin{equation}
\label{eq: Q and k}
2^\frac{(m+n+3)(m+n)}{4m}\cdot 2^\frac{(k-1)n}{m} \leq Q < 2^\frac{(m+n+3)(m+n)}{4m} \cdot 2^\frac{kn}{m} .
\end{equation}

From Lemma~\ref{lem kth result} we know that $q(k)$ satisfies the inequality
\[
\max_j |q_j(k)| \leq 2^\frac{(m+n+3)(m+n)}{4m} \, 2^\frac{(k-1)n}{m}  \leq Q.
\]
From the right hand side of inequality~(\ref{eq: Q and k}) if follows that $ \frac{1}{2^k} <  2^\frac{(m+n+3)(m+n)}{4n} Q^\frac{-m}{n}.$ From Lemma~\ref{lem kth result} and this inequality  we derive that
$$
\max_i\|q_1(k) \,a_{i1} + \dots +  q_m(k) \, a_{im}   \| \leq  \frac{1}{2^k} <  2^{\frac{(m+n+3)(m+n)}{4n}} Q^\frac{-m}{n}.
$$
\end{proof}

Proposition~\ref{prop: LLL b1}(ii) guarantees that if there exists an extremely short vector in the lattice, then the LLL-algorithm finds a rather short lattice vector. We extend this result to the realm of successive approximations. In the next lemma we show that for every very good approximation, the ILLL-algorithm finds a rather good one not too far away from it.

\begin{Lemma}
\label{lem: what we find}
Let an $n \times m$-matrix $A$ with entries $a_{ij}$ in $\R$, a real number $0<\delta<1$ and an integer $s >1$ be given. If there exists an $m$-tuple $s_1,\dots,s_m$ with
\begin{eqnarray}
\label{eq: S bound} &&s = \max_j |s_j|  >2^\frac{(m+n-1)n}{4m} \left(\frac{n\delta^2}{m}\right)^\frac{n}{2(m+n)}\\
\nonumber \textrm{ and} \\
\label{eq: S quality bound} &&\max_i \|s_1 a_{i1} + \dots +  s_m a_{im}   \| \leq \delta s^\frac{-m}{n},
\end{eqnarray}
 then applying the ILLL-algorithm with 
\begin{equation}
q_{\max} \geq 2^\frac{m^2+m(n-1)+4n}{4m} \left(\frac{m}{n\delta^2} \right)^\frac{n}{2(m+n)} \, s \label{eq: which q} 
\end{equation}
yields an $m$-tuple $q_1,\dots,q_m$ with
\begin{eqnarray}
\label{eq: q for s} 
 \max_j |q_j|  &\leq &2^\frac{m^2+m(n-1)+4n}{4m}\left(\frac{m}{n\delta^2}\right)^\frac{n}{2(m+n)}s \\
 \nonumber \textrm{ and } && \\
 \label{eq:quality for s}  \max_i\|q_1 a_{i1} + \dots +  q_m a_{im} \| & \leq &   2^\frac{m+n}{2} \sqrt{n}\delta s^\frac{-m}{n}.
\end{eqnarray}
\end{Lemma}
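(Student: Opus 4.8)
The plan is to run the ILLL-algorithm and isolate the single iteration $k$ in which the constant $c(k)$ is calibrated so that the lattice vector coming from the hypothetical good approximation $(s_1,\dots,s_m)$ becomes \emph{extremely} short, and then invoke Proposition~\ref{prop: LLL b1}(ii) to conclude that the LLL-algorithm at that step returns a \emph{rather} short vector. First I would fix the target precision: the approximation $(s_1,\dots,s_m)$ in~(\ref{eq: S bound})--(\ref{eq: S quality bound}) has $\max_i\|{\textstyle\sum_j} s_j a_{ij}\|\le \delta s^{-m/n}$, so I want to choose the iteration index $k$ so that $2^{-k}$ is of the order of $\delta s^{-m/n}$ — more precisely, pick the unique $k$ with $2^{-k}\le \delta s^{-m/n} < 2^{-(k-1)}$, i.e. $k=\lceil \tfrac{m}{n}\log_2 s-\log_2\delta\rceil$. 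I then need to check that this $k$ is in the valid range $1\le k\le k'$ for the chosen $q_{\max}$ in~(\ref{eq: which q}); the lower bound $k\ge 1$ follows from the hypothesis~(\ref{eq: S bound}) on the size of $s$, and $k\le k'$ follows from comparing~(\ref{eq: which q}) with the definition~(\ref{eq: number of steps}) of $k'$. This bookkeeping is exactly why the somewhat opaque constant $2^{(m+n-1)n/(4m)}(n\delta^2/m)^{n/(2(m+n))}$ appears in~(\ref{eq: S bound}).

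Next I would estimate the Euclidean length of the lattice vector $v_s\in L_k$ (the lattice of iteration $k$, with $c$ replaced by $c(k)$) whose top $n$ coordinates are $\sum_j s_j a_{ij}-p_i$ (with $p_i$ the nearest integers) and whose bottom $m$ coordinates are $c(k)s_1,\dots,c(k)s_m$. Its squared length is at most $n(\delta s^{-m/n})^2 + m\,c(k)^2 s^2$. Using $c(k)=\bigl(2^{-(m+n+3)/4-k+1}\bigr)^{(m+n)/m}$ (the value recorded in the proof of Lemma~\ref{lem kth result}) together with the choice of $k$ above, both terms are controlled by a fixed multiple of $\delta^2 s^{-2m/n}$; so $|v_s|^2 \le C_1\,\delta^2 s^{-2m/n}$ for an explicit constant $C_1=C_1(m,n)$. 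Then Proposition~\ref{prop: LLL b1}(ii) gives $|b_1|^2\le 2^{m+n-1}|v_s|^2 \le 2^{m+n-1}C_1\,\delta^2 s^{-2m/n}$ for the first vector $b_1$ of the reduced basis the LLL-algorithm produces in iteration $k$. From the bottom $m$ coordinates of $b_1$ I read off $|q_j|\le |b_1|/c(k)$, and from the top $n$ coordinates $\max_i\|{\textstyle\sum_j}q_j a_{ij}\|\le |b_1|$. Substituting the bound on $|b_1|$ and the value of $c(k)$, and finally eliminating $2^{-k}$ in favour of $\delta s^{-m/n}$ via the defining inequalities of $k$, should produce precisely~(\ref{eq: q for s}) and~(\ref{eq:quality for s}), up to tracking the constants $2^{(m^2+m(n-1)+4n)/(4m)}(m/(n\delta^2))^{n/(2(m+n))}$ and $2^{(m+n)/2}\sqrt{n}$.

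The main obstacle I expect is purely the constant-chasing: one has to choose $k$ so that simultaneously (a) $k$ lands in $[1,k']$, (b) the two contributions to $|v_s|^2$ — the ``quality'' part $n\delta^2 s^{-2m/n}$ and the ``scaling'' part $m c(k)^2 s^2$ — are \emph{balanced}, which is what forces the exact exponent $n/(2(m+n))$ and the factor $n/m$ inside the parentheses, and (c) after dividing by $c(k)$ the resulting bound on $\max_j|q_j|$ is still at most $q_{\max}$, which is why~(\ref{eq: which q}) is stated with that exact constant. Getting the ceiling function in the definition of $k$ to interact cleanly with these three requirements — i.e. absorbing the rounding loss into the stated constants rather than degrading the exponents — is the delicate point; everything else is a substitution. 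A secondary subtlety is that the inequality in~(\ref{eq: S quality bound}) is a bound on $\|\cdot\|$ (distance to the nearest integer), so the $p_i$ in $v_s$ are chosen as nearest integers and the top-coordinate contribution is genuinely $\le n\delta^2 s^{-2m/n}$ with no extra slack — this is what keeps $C_1$ as small as claimed.
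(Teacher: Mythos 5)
Your overall strategy --- embed the hypothetical approximation $(s_1,\dots,s_m)$ as a short vector $v_s$ in the lattice of a well-chosen iteration and apply Proposition~\ref{prop: LLL b1}(ii) --- is exactly the paper's, but your calibration of the iteration index is off in a way that breaks the proof. You pick $k$ so that $2^{-k}\approx\delta s^{-m/n}$, i.e.\ you match the \emph{output quality} of iteration $k$ to the quality of $s$. The paper instead picks $K$ minimal with $c(K)\le\sqrt{n/m}\,\delta s^{-(m+n)/n}$, which balances the two contributions $n\delta^2s^{-2m/n}$ and $mc(K)^2s^2$ to $|v_s|^2$; unwinding $c(k)=\bigl(2^{-(m+n+3)/4-k+1}\bigr)^{(m+n)/m}$, this corresponds to $2^{-K}\approx\delta^{m/(m+n)}s^{-m/n}$. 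Since $\delta<1$, your $k$ is strictly deeper than $K$, and this costs you twice. First, reachability: iteration $k$ requires roughly $q_{\max}\gtrsim 2^{kn/m}\approx\delta^{-n/m}s$, whereas hypothesis~(\ref{eq: which q}) only supplies $q_{\max}\gtrsim\delta^{-n/(m+n)}s$, which is smaller; so the algorithm may halt before your iteration $k$ is ever run, and your claim that ``$k\le k'$ follows from comparing~(\ref{eq: which q}) with~(\ref{eq: number of steps})'' is false as stated. Second, the $q$-bound: reading off $\max_j|q_j|\le|b_1|/c(k)$ with $|b_1|\lesssim\delta s^{-m/n}$ and your $c(k)\approx\delta^{(m+n)/m}s^{-(m+n)/n}$ yields $\max_j|q_j|\lesssim\delta^{-n/m}s$, which is strictly weaker than the claimed $\delta^{-n/(2(m+n))\cdot 2}s=\delta^{-n/(m+n)}s$ in~(\ref{eq: q for s}).

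Two further points to repair. With the correct (balanced) choice of $K$, the paper does \emph{not} bound $\max_j|q_j(K)|$ by $|b_1|/c(K)$ via Proposition~\ref{prop: LLL b1}(ii); it uses the determinant bound of Proposition~\ref{prop: LLL b1}(i), i.e.\ inequality~(\ref{eq: q in c}) giving $\max_j|q_j(K)|\le 2^{(m+n-1)/4}c(K)^{-n/(m+n)}$, combined with a \emph{lower} bound on $c(K)$ coming from the minimality of $K$ (namely $c(K-1)>\sqrt{n/m}\,\delta s^{-(m+n)/n}$, hence $c(K)>2^{-(m+n)/m}\sqrt{n/m}\,\delta s^{-(m+n)/n}$). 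You need such a lower bound on $c$ at the chosen iteration in any case, and the case $K=1$ (where no such lower bound is available from minimality) must be handled separately using~(\ref{eq: S bound}) --- this is the actual role of that hypothesis, not ensuring $k\ge1$ in your sense. If you re-run your argument with the paper's $K$ and these two ingredients, the constants in~(\ref{eq: q for s}) and~(\ref{eq:quality for s}) do come out as stated.
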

\begin{proof}
Let $1\leq k \leq k^\prime$ be an integer. Proposition~\ref{prop: LLL b1}(ii) gives that for each $q(k)$ found by the algorithm
\begin{eqnarray*}
&& \sum_{i=1}^n \|q_1(k) a_{i1} + \dots + q_m(k) a_{im}\|^2  + c(k)^2 \sum_{j=1}^m q_j(k)^2 \\
&& \leq 2^{m+n-1} \left( \sum_{i=1}^n \|s_1 a_{11} + \dots + s_m a_{im}\|^2 + c(k)^2 \sum_{j=1}^m s_j^2 \right).
\end{eqnarray*}
From this and~(\ref{eq: S bound})~and~(\ref{eq: S quality bound}) it follows that
\begin{equation}
\label{eq: step1 qk proof}
\max_i  \|q_1(k) a_{i1} + \dots + q_m(k) a_{im}\|^2 \leq 2^{m+n-1} \left( n\delta^2 s^\frac{-2m}{n}+ c(k)^2 m s^2\right).
\end{equation}

Take the smallest positive integer $K$ such that
\begin{equation}
\label{eq: ck bound in proof}
c(K) \leq \sqrt{\frac{n}{m}}\delta s^{-\frac{m+n}{n}}.
\end{equation}

We find for step $K$ from~(\ref{eq: step1 qk proof}) and~(\ref{eq: ck bound in proof})
$$
\max_i  \|q_1(K) a_{i1} + \dots + q_m(K) a_{im}\| \leq 2^\frac{m+n}{2} \sqrt{n}\delta s^\frac{-m}{n},
$$
which gives~(\ref{eq:quality for s}). 

We show that under assumption~(\ref{eq: which q}) the ILLL-algorithm makes at least $K$ steps. We may assume $K>1$, since the ILLL-algorithm always makes at least 1 step. From Lemma~\ref{lemma: number steps } we find that if $q_{\max}$ satisfies
$$
q_{\max} > 2^\frac{Kn}{m} 2^\frac{(m+n-1)(m+n)}{4m},
$$
then the ILLL-algorithm makes at least $K$ steps. Our choice of $K$ implies 
$$
c(K-1) = \frac{c(1)}{2^\frac{(m+n)(K-2)}{m}}=  \frac{2^{-\frac{(m+n+3)(m+n)}{4m}}}{2^\frac{(m+n)(K-2)}{m}} >  \sqrt{\frac{n}{m}}\delta s^{-\frac{m+n}{n}},
$$ 
and we obtain
$$
2^\frac{Kn}{m} < 2^{-\frac{(m+n-5)n}{4m}} \left(\frac{m}{n\delta^2}\right)^\frac{n}{2(m+n)}s. 
$$
From this we find that 
$$
q_{\max} >  2^\frac{m^2+m(n-1)+4n}{4m} \left(\frac{m}{n\delta^2}\right)^\frac{n}{2(m+n)} s 
$$
is a satisfying condition to guarantee that the algorithm makes at least $K$ steps.

Furthermore, either $2^\frac{-(m+n)}{m} \sqrt{\frac{n}{m}}\delta s^{-\frac{m+n}{n}} < c(K)$ or $K=1$. In the former case we find from~(\ref{eq: q in c}) that 
$$
\max_j |q_j(K)| \leq 2^\frac{m+n-1}{4}c(K)^\frac{-n}{m+n} < 2^\frac{m+n-1}{4} 2^\frac{n}{m}\left(\frac{m}{n\delta^2}\right)^\frac{n}{2(m+n)}s.
$$
In the latter case we obtain from~(\ref{eq: q in c}) 
$$
\max_j |q_j(1)| \leq 2^\frac{m+n-1}{4}c(1)^\frac{-n}{m+n} = 2^\frac{m+n-1}{4}2^\frac{(m+n+3)n}{4m}
$$
and, by~(\ref{eq: S bound}), 
$$
 2^\frac{m+n-1}{4}2^\frac{(m+n+3)n}{4m} =  2^\frac{m+n-1}{4}2^\frac{n}{m}2^\frac{(m+n-1)n}{4m}< 2^\frac{m+n-1}{4} 2^\frac{n}{m}\left(\frac{m}{n\delta^2}\right)^\frac{n}{2(m+n)} s. 
$$
We conclude that for all $K\geq1$
$$
\max_j |q_j(K)| \leq   2^\frac{m^2+m(n-1)+4n}{4m}\left(\frac{m}{n\delta^2}\right)^\frac{n}{2(m+n)}s.
$$
\end{proof}


Note that from~(\ref{eq: q for s}) and~(\ref{eq:quality for s}) it follows that  
\begin{equation}
\label{eq: cor lemma what you find}
q^\frac{m}{n} \max_i\|q_1 a_{i1} + \dots +  q_m a_{im} \| \leq 2^\frac{m^2+m(3n-1)+4n+2n^2}{4n} m^\frac{m}{2(m+n)} (n\delta^2)^\frac{n}{2(m+n)},
\end{equation}
where again $q = \displaystyle{\max_j |q_j|}$.


\begin{Theorem}
\label{th: what you find}
Let an $n \times m$-matrix $A$ with entries $a_{ij}$ in $\R$ and $q_{\max} >1$ be given. Assume that $\gamma$ is such that for every $m$-tuple $q_1,\dots,q_m$ returned by the ILLL-algorithm 
\begin{equation}
\label{eq: cond q}
q^\frac{m}{n} \max_i \| q_1 a_{i1}+\dots q_m a_{im}\| >  \gamma   \textrm{, where } q=\max_j |q_j|. \\ 
\end{equation}
Then every $m$-tuple $s_1,\dots,s_m$ with $ s =   \max_j |s_j| $ and
$$ 2^\frac{(m+n-1)n}{4m} \left(\frac{n\delta^2}{m}\right)^\frac{n}{2(m+n)}< s <2^{-\frac{m^2+m(n-1)+4n}{4m}}\left( \frac{n \delta^2}{m}\right)^\frac{n}{2(m+n)}q_{\max}$$ 
satisfies
$$
s^\frac{m}{n} \max_i \|s_1 a_{i1} + \dots +  s_m a_{im}   \| > \delta,
$$
with
\begin{equation}
\label{eq: set delta}
\delta =2^\frac{-(m+n)(m^2+m(3n-1)+4n+2n^2)}{4n^2}  m^\frac{-m}{2n} n^\frac{-1}{2} \gamma^\frac{m+n}{n} .
\end{equation}
\end{Theorem}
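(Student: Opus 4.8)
The plan is to prove this theorem as the contrapositive of Lemma~\ref{lem: what we find}, argued by contradiction. The number $\delta$ defined in~(\ref{eq: set delta}) is exactly the value that calibrates the right-hand side of~(\ref{eq: cor lemma what you find}) to equal $\gamma$; once that is set up, the existence of a too-good approximation $s_1,\dots,s_m$ in the indicated range would force the ILLL-algorithm to output an $m$-tuple whose Dirichlet coefficient is $\leq\gamma$, contradicting the standing hypothesis~(\ref{eq: cond q}).

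First I would record the algebraic identity underlying~(\ref{eq: set delta}). Writing $E=\frac{m^2+m(3n-1)+4n+2n^2}{4n}$, the bound on the right of~(\ref{eq: cor lemma what you find}) is $2^{E}\,m^{\frac{m}{2(m+n)}}\,n^{\frac{n}{2(m+n)}}\,\delta^{\frac{n}{m+n}}$. Setting this equal to $\gamma$ and raising to the power $\frac{m+n}{n}$ gives $\delta=\gamma^{\frac{m+n}{n}}\,2^{-E\frac{m+n}{n}}\,m^{-\frac{m}{2n}}\,n^{-\frac12}$, and since $E\frac{m+n}{n}=\frac{(m+n)(m^2+m(3n-1)+4n+2n^2)}{4n^2}$ this is precisely the expression in~(\ref{eq: set delta}). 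So henceforth $\delta$ denotes this fixed number.

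Next I would argue by contradiction: suppose there is an $m$-tuple $s_1,\dots,s_m$ with $s=\max_j|s_j|$ in the open interval of the theorem and $s^{\frac{m}{n}}\max_i\|s_1a_{i1}+\dots+s_ma_{im}\|\leq\delta$, i.e.\ $\max_i\|s_1a_{i1}+\dots+s_ma_{im}\|\leq\delta s^{-\frac{m}{n}}$, which is hypothesis~(\ref{eq: S quality bound}). The left endpoint of the interval is hypothesis~(\ref{eq: S bound}) verbatim, and the right endpoint, $s<2^{-\frac{m^2+m(n-1)+4n}{4m}}(n\delta^2/m)^{\frac{n}{2(m+n)}}q_{\max}$, rearranges exactly to $q_{\max}>2^{\frac{m^2+m(n-1)+4n}{4m}}(m/(n\delta^2))^{\frac{n}{2(m+n)}}s$, which in particular implies condition~(\ref{eq: which q}). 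Thus every hypothesis of Lemma~\ref{lem: what we find} is met, so the ILLL-algorithm returns an $m$-tuple $q_1,\dots,q_m$ satisfying~(\ref{eq: q for s}) and~(\ref{eq:quality for s}), hence also~(\ref{eq: cor lemma what you find}); by the calibration of $\delta$ the right-hand side of~(\ref{eq: cor lemma what you find}) equals $\gamma$, so $q^{\frac{m}{n}}\max_i\|q_1a_{i1}+\dots+q_ma_{im}\|\leq\gamma$, contradicting~(\ref{eq: cond q}). Therefore no such $s$-tuple exists, which is the claim.

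There is essentially no hard step: the content is entirely that Lemma~\ref{lem: what we find} and its corollary~(\ref{eq: cor lemma what you find}) were stated with exactly the thresholds that reappear here, rescaled. The only points needing care are bookkeeping: verifying the exponent arithmetic that turns~(\ref{eq: cor lemma what you find}) into~(\ref{eq: set delta}), checking that the interval for $s$ in the theorem coincides with the range of validity in Lemma~\ref{lem: what we find}, and observing that it is the strict inequality in~(\ref{eq: cond q}) against the non-strict bound from~(\ref{eq: cor lemma what you find}) that yields the contradiction.
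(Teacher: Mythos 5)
Your proposal is correct and follows essentially the same route as the paper: a proof by contradiction that feeds the hypothetical too-good $s$-tuple into Lemma~\ref{lem: what we find} (after checking that the upper bound on $s$ yields condition~(\ref{eq: which q})) and uses the calibration of $\delta$ in~(\ref{eq: set delta}) against~(\ref{eq: cor lemma what you find}) to contradict~(\ref{eq: cond q}). The only difference is that you spell out the exponent arithmetic that the paper leaves implicit.
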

\begin{proof}
Assume that every vector returned by our algorithm satisfies~(\ref{eq: cond q}) and that there exists an  $m$-tuple $s_1,\dots,s_m$ with $s= \max_j |s_j| $ such that
\begin{eqnarray*}
&& 2^\frac{(m+n-1)n}{4m} \left(\frac{n\delta^2}{m}\right)^\frac{n}{2(m+n)} < s <2^{-\frac{m^2+m(n-1)+4n}{4m}}\left( \frac{n \delta^2}{m}\right)^\frac{n}{2(m+n)}q_{\max}\\
&\textrm{and }& s^\frac{m}{n} \max_i \|s_1 a_{i1} + \dots +  s_m a_{im}   \| \leq \delta.
\end{eqnarray*}
From the upper bound on $s$ it follows that $q_{max}$ satisfies~(\ref{eq: which q}). We apply Lemma~\ref{lem: what we find} and find that the algorithm finds an
$m$-tuple $q_1,\dots,q_m$ that satisfies~(\ref{eq: cor lemma what you find}). Substituting $\delta$ as given in~(\ref{eq: set delta}) gives
$$
q^\frac{m}{n} \max_i  \|q_1 a_{i1} + \dots + q_m a_{im}\| \leq \gamma,
$$ 
which is a contradiction with our assumption. 
\end{proof}






\section{A polynomial time version of the ILLL-algorithm}
\label{sec: polynomial}
We have used real numbers in our theoretical results, but in a practical implementation of the algorithm we only use rational numbers. Without loss of generality we may assume that these numbers are in the interval $[0,1]$. In this section we describe the necessary changes to the algorithm and we show that this modified version of the algorithm runs in polynomial time. 

As input for the rational algorithm we take
\begin{itemize}
\item the dimensions $m$ and $n$,
\item  a rational number $\varepsilon \in (0,1)$,
\item  an integer $M$ that is large compared to $\frac{(m+n)^2}{m}-\frac{m+n}{m}\log \varepsilon$,
\item an $n \times m$-matrix $A$ with entries $0<a_{ij}\leq 1$, where each $a_{ij}=\frac{p_{ij}}{2^M}$ for some integer $p_{ij}$,
\item an integer $q_{\max}<2^M$.
\end{itemize}

When we construct the matrix $B$ in step 1 of the ILLL-algorithm we approximate $c$ as given in~(\ref{eq:lll begin linear system}) by a rational
\begin{equation}
\label{eq: c rational}
\hat{c} = \frac{\lceil 2^M  c\rceil}{2^M} = \frac{ \left \lceil 2^M \left( 2^{-\frac{m+n-1}{4}} \varepsilon  \right)^\frac{m+n}{m}\right \rceil}{2^M}.
\end{equation}
Hence
$
c <  \hat{c} \leq c + \frac{1}{2^M}.
$

In iteration $k$ we use a rational $\hat{c}(k)$ that for $k\geq 2$ is given by
$$
\hat{c}(k) = \frac{\left \lceil 2^M \hat{c}(k-1)2^{-\frac{m+n}{m}}\right \rceil}{2^M} \textrm{ and } \hat{c}(1)=\hat{c} \textrm{ as in~}(\ref{eq: c rational}),
$$
and we change step 4 of the ILLL-algorithm to `multiply the last $m$ rows of $B$ by \mbox{$\hat{c}(k-1)/\hat{c}(k)$}'. The other steps of the rational iterated algorithm are as described in Section~\ref{sec: algorithm}. 

\subsection{The running time of the rational algorithm}
\begin{Theorem}
Let the input  be given as described above. Then the number of arithmetic operations needed by the ILLL-algorithm and the binary length of the integers on which these operations are performed  are both bounded by a polynomial in $m,n$ and $M$.
\end{Theorem}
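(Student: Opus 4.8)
The plan is to bound two quantities separately: the number of LLL-calls made by the rational ILLL-algorithm, and the cost of each individual call, and then multiply. For the number of calls, I would invoke Lemma~\ref{lemma: number steps } (which computes the iteration count $k'$ exactly): since $q_{\max}<2^M$, the closed form~(\ref{eq: number of steps}) immediately gives $k' = O\!\left(\frac{(m+n)^2}{m} + \frac{m}{n}\log_2 q_{\max}\right) = O\!\left(\frac{(m+n)^2}{m} + M\right)$, which is polynomial in $m,n,M$. The only subtlety is that we are now running the \emph{rational} variant with $\hat c(k)$ in place of $c(k)$; since $c < \hat c \le c + 2^{-M}$ and each multiplicative step in the rational algorithm multiplies the last $m$ rows by $\hat c(k-1)/\hat c(k)$, the $\hat c(k)$ stay within a bounded factor of the $c(k)$, so the stopping criterion is reached after the same order of magnitude of steps; one verifies that the rounding does not change $k'$ by more than an additive constant.

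For the cost of a single LLL-call I would apply Proposition~\ref{prop: LLL complexity}, which requires the lattice to sit inside $\Z^r$ with all basis vectors of squared length at most some $F\ge 2$. The lattice is in $\Z^{m+n}$ after scaling: since every $a_{ij}=p_{ij}/2^M$ and $\hat c(k)$ has denominator $2^M$, multiplying the whole basis matrix $B$ by $2^M$ produces an integer matrix, and LLL on the scaled lattice performs the same operations. So take $r=m+n$. The columns of $2^M B$ have entries bounded by: at most $2^M$ in the identity block, at most $\sum_j p_{ij}\le m\,2^M$ in the rows coming from $A$, and $2^M\hat c(k)\le 2^M$ (since $\hat c(k)\le \hat c(1)\le c+2^{-M}<1$ once $M$ is large, because $c=(2^{-(m+n-1)/4}\varepsilon)^{(m+n)/m}<1$). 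Hence each squared column length is at most $(m+n)\cdot (m\,2^M)^2 = (m+n)m^2 2^{2M}$, so we may take $F=(m+n)m^2 2^{2M}$, giving $\log F = O(\log(m+n) + \log m + M) = O(M + \log(mn))$. Proposition~\ref{prop: LLL complexity} then yields $O((m+n)^4\log F)$ arithmetic operations per call on integers of binary length $O((m+n)\log F)$, all polynomial in $m,n,M$.

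Combining, the total number of arithmetic operations is $k' \cdot O((m+n)^4 \log F)$, a product of quantities each polynomial in $m,n,M$, hence polynomial; and the integers involved have binary length $O((m+n)\log F)$, also polynomial. The one point that deserves care — and which I expect to be the main obstacle — is controlling the bit-size \emph{across} iterations: step~4 of the rational algorithm rescales $B$ by the rational factor $\hat c(k-1)/\hat c(k)$, and one must check that re-clearing denominators at each step (equivalently, working throughout with the matrix $2^M B$ and simply replacing the last $m$ rows' scalar by $2^M\hat c(k)$) keeps all entries integers of the bounded size claimed above, rather than letting denominators accumulate. Since each $\hat c(k)$ is by definition an integer multiple of $2^{-M}$, this is clean: at every iteration the working matrix is $2^M B^{(k)} \in \Z^{(m+n)\times(m+n)}$ with entries bounded by $m\,2^M$, so the per-call bound is uniform over $k$, and the final estimate follows by multiplying the per-call bound by $k'$.
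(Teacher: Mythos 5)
Your overall decomposition (number of LLL-calls times cost per call, then apply Proposition~\ref{prop: LLL complexity}) is the same as the paper's, your bound on $k'$ is correct, and your observation that all denominators remain equal to $2^M$ throughout (because every $\hat c(k)$ and every $a_{ij}$ is an integer multiple of $2^{-M}$) is also correct. The gap is in the per-call bound, specifically in the claim that ``at every iteration the working matrix is $2^M B^{(k)}$ with entries bounded by $m\,2^M$.'' That would hold if each iteration restarted LLL from a fresh matrix of the form~(\ref{eq:lll begin linear system}) with the updated $\hat c(k)$, but that is not what the ILLL-algorithm does: step~4 rescales the last $m$ rows of the \emph{current} basis, i.e.\ of the reduced basis returned by the previous LLL call, and that rescaled reduced basis is the input to the next call. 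Its vectors have the form $q_1a_{i1}+\dots+q_ma_{im}-p_i$ in the first $n$ coordinates and $\hat c(k)q_j$ in the last $m$, with integers $p_i,q_j$ that grow across iterations; only the \emph{first} vector of a reduced basis is guaranteed to be short, while the later ones can be long.

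The missing ingredient is exactly the point you flagged as ``the main obstacle'' and then resolved only for denominators: one must bound the \emph{numerators} of the reduced basis fed back into LLL. The paper does this via Proposition~\ref{prop: LLL b1}(iii), which gives $|b_i|\le 2^{(m+n)(m+n-1)/4}\det(L)\prod_{j\ne i}|b_j|^{-1}$, combined with a \emph{lower} bound on each $|b_j|$ coming from the shortest nonzero vector of the lattice (at least $2^{-M}$, since $\hat c(k)\ge 2^{-M}$ and all coordinates are multiples of $2^{-M}$). This yields $F=2^{O((m+n)M+(m+n)^2)}$ for the later iterations rather than your $F=O\bigl((m+n)m^2 2^{2M}\bigr)$. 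Since $\log F$ is still polynomial in $m,n,M$, the theorem survives, but your argument as written only covers the first LLL call (or a cold-restart variant of the algorithm that the paper does not analyze).
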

\begin{proof}
The number of times we apply the LLL-algorithm is not changed by rationalizing $c$, so we find the number of steps $k^\prime$ from Lemma~\ref{lemma: number steps } 
$$
k^\prime = \left\lceil    - \frac{(m+n-1)(m+n)}{4n} + \frac{m \log_2 q_{\max}}{n }   \right\rceil < \left\lceil \frac{mM}{n} \right\rceil .
$$
It is obvious that steps 1, 3, 4 and 5 of the algorithm are polynomial in the size of the input  and we focus on the LLL-step. We determine an upper bound for the length of a basis vector used at the beginning of an iteration in the ILLL-algorithm.

In the first application of the LLL-algorithm the length of the initial basis vectors as given in~(\ref{eq:lll begin linear system}) is bounded by 
$$
|b_i|^2 \leq \displaystyle{\max_j \left\{1, a^2_{1j}+\dots+a^2_{nj}+m\hat{c}^2 \right\}} \leq  m+n, 
  \quad \textrm{ for } 1 \leq i \leq m+n\ .
$$
where we use that $0 <a_{ij} <1$ and $\hat{c} \leq 1$.

The input of each following application of the LLL-algorithm is derived from the reduced basis found in the previous iteration by making some of the entries strictly smaller. Part (ii) of Proposition~\ref{prop: LLL b1} yields that for every vector $b_i$ in a reduced basis it holds that
$$
|b_i|^2 \leq 2^\frac{(m+n)(m+n-1)}{2} (\det(L))^2  \prod_{j=1,j\neq i}^{m+n} |b_i|^{-2}.
$$
The determinant of our starting lattice is given by $\hat{c}^m$ and the determinants of all subsequent lattices are strictly smaller.  Every vector $b_i$ in the lattice is at least as long as the shortest non-zero vector in the lattice. Thus for each~$i$ we have $|b_i|^2 \geq \frac{1}{2^M}$. Combining this yields
$$
|b_i|^2 \leq 2^\frac{(m+n+2M)(m+n-1)}{2} \hat{c}^{2m}   \leq 2^\frac{(m+n+2M)(m+n-1)}{2} 
$$
for every vector used as input for the LLL-step after the first iteration. 

So we have
\begin{equation}
\label{eq: bj bound}
|b_i|^2 < \max \left\{ m+n \,,\,2^\frac{(m+n+2M)(m+n-1)}{2}  \right\} = 2^\frac{(m+n+2M)(m+n-1)}{2}
\end{equation}
for any basis vector that is used as input for an LLL-step in the ILLL-algorithm.

Proposition~\ref{prop: LLL complexity} shows that for a given basis  $b_1,\dots,b_{m+n}$ for $\Z^{m+n}$ with $F \in \R$, $F \geq 2$ such that $|b_i|^2\leq F$ for $1 \leq i \leq m+n$ the number of arithmetic operations needed to find a reduced basis from this input is $O((m+n)^4 \log F)$. For matrices with entries in $\Q$ we need to clear denominators before applying this proposition. Thus for a basis with basis vectors $|b_i|^2\leq F$ and rational entries that can all be written as fractions with denominator $2^M$ the number of arithmetic operations is $O((m+n)^4 \log (2^{2M} F))$.

Combining this with~(\ref{eq: bj bound}) and the number of steps yields the proposition.  
\end{proof}

\subsection{Approximation results from the rational algorithm}
Assume that the input matrix $A$ (with entries $a_{ij}  = \frac{p_{ij}}{2^M} \in \Q$) is an approximation of an $n\times m$-matrix $\mathcal{A}$ (with entries $\alpha_{ij}\in \R$), found by putting $ a_{ij}= \frac{  \lceil 2^M \alpha_{ij} \rceil}{2^M}$. In this subsection we derive the approximation results guaranteed by the rational iterated algorithm for the $\alpha_{ij} \in \R$. 

According to~(\ref{eq: q in c}) and~(\ref{eq: quality in c}) the LLL-algorithm applied with $\hat{c}$ instead of $c$ guarantees to find an $m$-tuple $q_1,\dots,q_m$ such that
\begin{eqnarray*}
q= \max_j |q_j| &\leq&2^\frac{(m+n-1)(m+n)}{4m}\varepsilon^\frac{-n}{m},\\
\textrm{ and } && \\ 
\max_i \| q_1 a_{i1} + \dots + q_m a_{im}\| &\leq& 2^\frac{m+n-1}{4} \left(\left(2^{-\frac{m+n-1}{4}} \varepsilon\right)^\frac{m+n}{m} +\frac{1}{2^M} \right)^\frac{m}{m+n}\\
&\leq& \varepsilon +2^\frac{(m+n-1)(m+n)-4Mm}{4(m+n)},
\end{eqnarray*}
the last inequality follows from the fact that $(x+y)^\alpha \leq x^\alpha + y^\alpha $ if $\alpha<1$ and $x,y>0$.

For the $\alpha_{ij}$ we find that
\[
\begin{aligned}
\max_i \| q_1 \alpha_{i1} + \dots + q_m \alpha_{im}\| &\leq \max_i \| q_1 a_{i1} + \dots + q_m a_{im}\| + mq 2^{-M}\\
& \leq \varepsilon + 2^{\frac{m+n-1}{4}-\frac{Mm}{m+n}} + m \varepsilon^\frac{-n}{m} 2^{\frac{(m+n-1)(m+n)}{4m}-M}.
\end{aligned}
\]
On page~\pageref{sec: polynomial} we have chosen $M$ large enough to guarantee that the error introduced by rationalizing the entries is negligible.

We show that in every step the difference between $\hat{c}(k)$ and $c(k)$ is bounded by $\frac{2}{2^M}$.

\begin{Lemma}
For each integer $k \geq 0$,
$$
c(k) \leq \hat{c}(k) < c(k) + \frac{1}{2^M} \sum_{i=0}^k 2^{-\frac{i(m+n)}{m}} < c(k) + \frac{2}{2^M}.
$$
\end{Lemma}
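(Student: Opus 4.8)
The plan is to prove the inequality by induction on $k$, viewing $\hat c(k)$ as $c(k)$ plus an accumulated rounding error and showing that this error telescopes. The only facts about the ceiling function I will use are the elementary bounds $x\le\lceil x\rceil<x+1$, valid for every real $x$ (the right-hand inequality is strict in all cases, since $\lceil x\rceil=x$ when $x\in\Z$), together with the defining relations $2^M\hat c(1)=\lceil 2^Mc\rceil$ and $2^M\hat c(k)=\bigl\lceil 2^M\hat c(k-1)2^{-(m+n)/m}\bigr\rceil$ for $k\ge 2$, and $c(k)=c(k-1)/2^{(m+n)/m}$. For the base case ($k$ at its smallest value) the assertion reduces to the inequality $c<\hat c\le c+2^{-M}$ recorded right after~(\ref{eq: c rational}), which already implies $c(1)\le\hat c(1)<c(1)+2^{-M}\sum_{i=0}^{1}2^{-i(m+n)/m}$ and a fortiori the bound by $c(1)+2\cdot 2^{-M}$.

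For the inductive step, assume the statement for $k-1$. The lower bound is immediate from $\lceil x\rceil\ge x$: one has $\hat c(k)\ge\hat c(k-1)2^{-(m+n)/m}\ge c(k-1)2^{-(m+n)/m}=c(k)$, where the middle step is the inductive hypothesis and the last is the definition of $c(k)$. For the upper bound I would chain
\[
\hat c(k)\;<\;\hat c(k-1)\,2^{-(m+n)/m}+2^{-M}
\;<\;\Bigl(c(k-1)+2^{-M}\sum_{i=0}^{k-1}2^{-i(m+n)/m}\Bigr)2^{-(m+n)/m}+2^{-M},
\]
using $\lceil x\rceil<x+1$ first and then the inductive hypothesis (multiplied by the positive constant $2^{-(m+n)/m}$, which preserves strictness). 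After reindexing, the sum on the right becomes $\sum_{i=1}^{k}2^{-i(m+n)/m}$ and the solitary $2^{-M}$ supplies the missing $i=0$ term, so the right-hand side equals exactly $c(k)+2^{-M}\sum_{i=0}^{k}2^{-i(m+n)/m}$, which is the claimed middle bound.

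It then remains to bound the geometric sum by $2$, independently of $k$. Since $m,n\ge 1$ we have $(m+n)/m=1+n/m>1$, hence the ratio satisfies $2^{-(m+n)/m}=\frac12\cdot 2^{-n/m}<\frac12$; consequently $\sum_{i=0}^{k}2^{-i(m+n)/m}<\sum_{i=0}^{\infty}2^{-i(m+n)/m}=\bigl(1-2^{-(m+n)/m}\bigr)^{-1}<2$ for every $k$, which finishes the proof.

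I do not expect a real obstacle: the argument is a routine telescoping of rounding errors. The two points that need a little care are keeping every upper-bound inequality strict throughout — which is automatic because $\lceil x\rceil<x+1$ is itself strict — and the index bookkeeping in the inductive step, where multiplication by $2^{-(m+n)/m}$ shifts the summation range from $\{0,\dots,k-1\}$ to $\{1,\dots,k\}$ and the freshly introduced $2^{-M}$ must be recognized as the $i=0$ term of the new sum.
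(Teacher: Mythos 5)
Your proof is correct and follows essentially the same route as the paper's: induction on $k$, using $x\le\lceil x\rceil<x+1$ to telescope the rounding errors into the geometric sum $\frac{1}{2^M}\sum_{i=0}^{k}2^{-i(m+n)/m}$, and bounding that sum by $2$ via the ratio $2^{-(m+n)/m}<\tfrac12$. The only difference is cosmetic (you anchor the induction at the first defined index while the paper writes the base case as $k=0$), and your index bookkeeping in the inductive step checks out.
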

\begin{proof}
We use induction. For $k=0$ we have $\hat{c}(0) =   \frac{\left \lceil c(0)2^M \right \rceil}{2^M}$ and trivially
$$
c(0) \leq \hat{c}(0) < c(0) + \frac{1}{2^M}.
$$
Assume that $\displaystyle{c(k-1) \leq \hat{c}(k-1) < c(k-1) + \frac{1}{2^M} \sum_{i=0}^{k-1} 2^{-\frac{i(m+n)}{m}}}$ and consider $\hat{c}(k) $. From the definition of $\hat{c}(k)$ and the induction assumption it follows that
$$
\hat{c}(k)  = \frac{\left \lceil \hat{c}(k-1) \,2^{-\frac{m+n}{m}} 2^M \right \rceil}{2^M} \geq \frac{ \hat{c}(k-1) }{2^\frac{m+n}{m}} \geq  \frac{ c(k-1) }{2^\frac{m+n}{m}} =c(k)
$$
and
$$
\begin{aligned}
\hat{c}(k)  = \frac{\left \lceil \hat{c}(k-1) \, 2^{-\frac{m+n}{m}} 2^M \right \rceil}{2^M} &< \frac{\hat{c}(k-1)}{2^\frac{m+n}{m} } + \frac{1}{2^M} \\
&<   \frac{c(k-1) + \frac{1}{2^M} \sum_{i=0}^{k-1} 2^{-\frac{i(m+n)}{m}}}{2^\frac{m+n}{m} } + \frac{1}{2^M} \\
&= c(k) +  \frac{1}{2^M} \sum_{i=0}^{k} 2^{-\frac{i(m+n)}{m}}.
 \end{aligned}
$$
Finally note that $\displaystyle{ \sum_{i=0}^{k} 2^{-\frac{i(m+n)}{m}} <2}$ for all $k$.
\end{proof}

One can derive analogues of Theorem~\ref{th: for each Q}, Lemma~\ref{lem: what we find} and Theorem~\ref{th: what you find} for the polynomial version of the ILLL-algorithm by carefully adjusting for the introduced error. We do not give the details, since in practice this error is negligible.

\section{Experimental data}
\label{sec: numerical}
In this section we present some experimental data from the rational ILLL-algorithm. In our experiments we choose the dimensions $m$ and $n$ and iteration speed $d$. We fill the $m \times n$ matrix $A$ with random numbers in the interval $[0,1]$ and repeat the entire ILLL-algorithm for a large number of these random matrices to find our results. First we look at the distribution of the approximation quality. Then we look at the growth of the denominators $q$ found by the algorithm.

\subsection{The distribution of the approximation qualities}
For one-dimensional continued fractions the approximation coefficients  $\Theta_k$ are defined as 
$$
\Theta_k= q_k^2 \left| a -\frac{p_k}{q_k} \right|,
$$
where $p_k/q_k$ is the $n$th convergent of $a$.

For the multi-dimensional case we define $\Theta_k$ in a similar way
\begin{equation}
\label{eq: theta multi-dimensional}
\Theta_k =  q(k)^\frac{m}{n} \max_i \| q_1(k) \,a_{i1} + \dots + q_m(k) \,a_{im} \|  . 
\end{equation}

\subsubsection{The one-dimensional case $m=n=1$}

In~\cite{BKopt} it was shown that for optimal continued fractions for almost all $a$ one has that

$ \lim_{N \rightarrow \infty} \frac{1}{N} \# \left\{ 1 \leq n \leq N: \Theta_n(x) \leq z \right\} = F(z)$,  where
$$
\begin{aligned}
&F(z) = \begin{cases}
\displaystyle{\frac{z}{\log G}}, &0 \leq z \leq \frac{1}{\sqrt{5}},\\
\\
\displaystyle{\frac{\sqrt{1-4z^2}+\log (G \frac{1-\sqrt{1-4z^2}}{2z}) }{\log G}},  &\frac{1}{\sqrt{5}} \leq z \leq \frac12,\\
\\
1  ,& \frac12 \leq z \leq 1,
\end{cases}
\end{aligned}
$$
where $G = \frac{\sqrt{5}+1}{2}$.

As the name suggests, the optimal continued fraction algorithm gives the optimal approximation results. The denominators it finds, grow with maximal rate and all approximations with $\Theta <\frac12$ are found. 

We plot the distribution of the $\Theta$'s found by the ILLL-algorithm for $m=n=1$ and $d=2$ in Figure~\ref{im: theta m1n1d2}. The ILLL-algorithm might find the same approximation more than once. We see in Figure~\ref{im: theta m1n1d2} that for $d=2$ the distribution function differs depending on whether we leave in the duplicates or sort them out. With the duplicate approximations removed the distribution of $\Theta$ strongly resembles $F(z)$ of the optimal continued fraction. The duplicates that the ILLL-algorithm finds are usually good approximations: if they are much better than necessary they will also be an admissible solution in the next few iterations.

\begin{figure}[!ht]
\includegraphics[height=50mm]{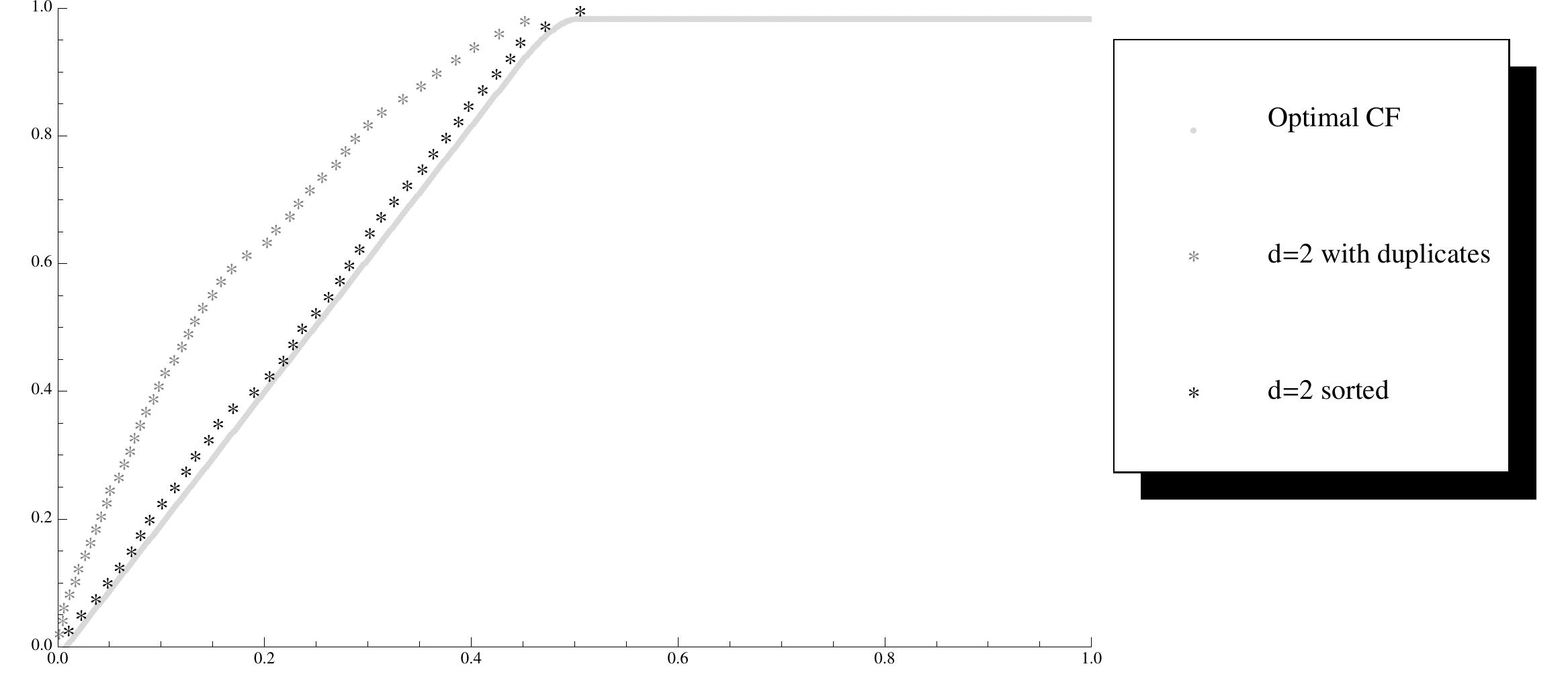}
\caption{The distribution function for $\Theta$ from ILLL with \mbox{$m=n=1$} and  $d=2$, with and without the duplicate approximations, compared to the distribution function of $\Theta$ for optimal continued fractions.}
\label{im: theta m1n1d2}
\end{figure}

For larger $d$ we do not find so many duplicates, because the quality has to improve much more in every step; also see Figure~\ref{im: theta m1n1d64} for an example with $d=64$.

\begin{figure}[!ht]
\includegraphics[height=50mm]{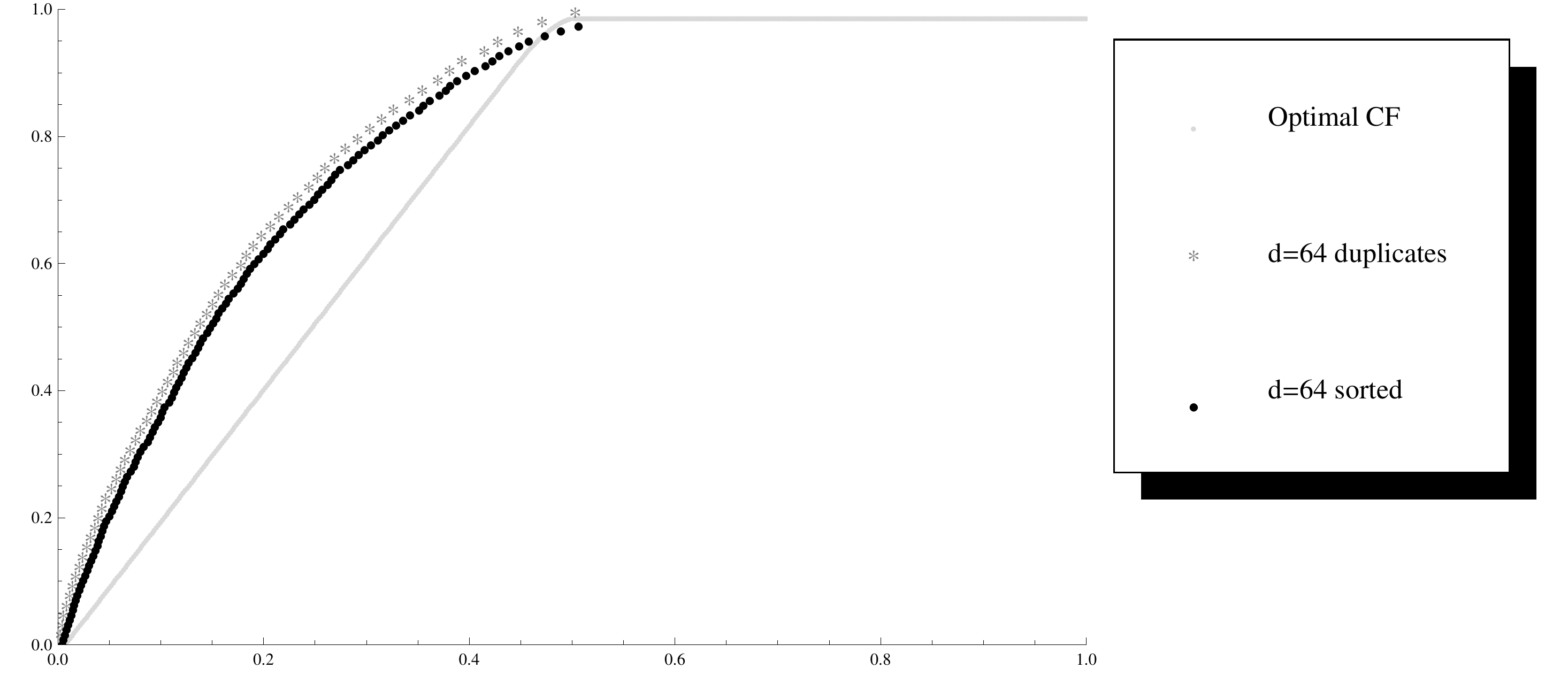}
\caption{The distribution function for $\Theta$ from ILLL with \mbox{$m=n=1$} and  $d=64$, with and without the duplicate approximations, compared to the distribution function of $\Theta$ for optimal continued fractions.}
\label{im: theta m1n1d64}
\end{figure}

From now on we remove duplicates from our results.

\begin{figure}[!ht]
\includegraphics[height=50mm]{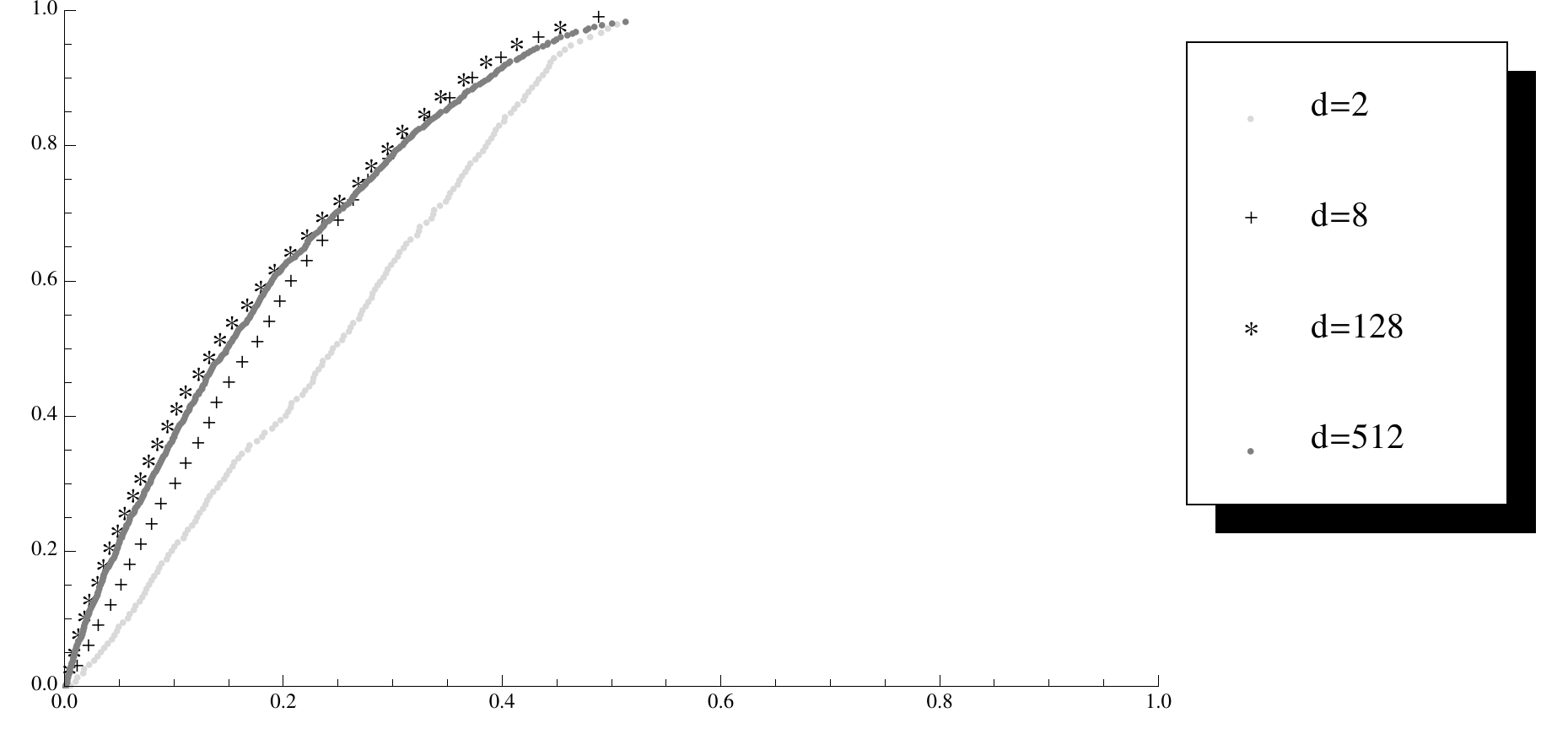}
\caption{The distribution function for $\Theta$ from ILLL (with duplicates removed) with \mbox{$m=n=1$} and various values of $d$.}
\label{im: theta dlimit}
\end{figure}




\subsection{The multi-dimensional case}
In this section we show some results for the distribution of the $\Theta$'s found by the ILLL-algorithm. For fixed $m$ and $n$ there also appears to be a limit distribution for $\Theta$ as $d$ grows. See~Figure~\ref{im: theta m3n2} for an example with $m=3$ and $n=2$, and compare this with Figure~\ref{im: theta dlimit}. In this section we fix $d=512$.

\begin{figure}[!ht]
\includegraphics[height=50mm]{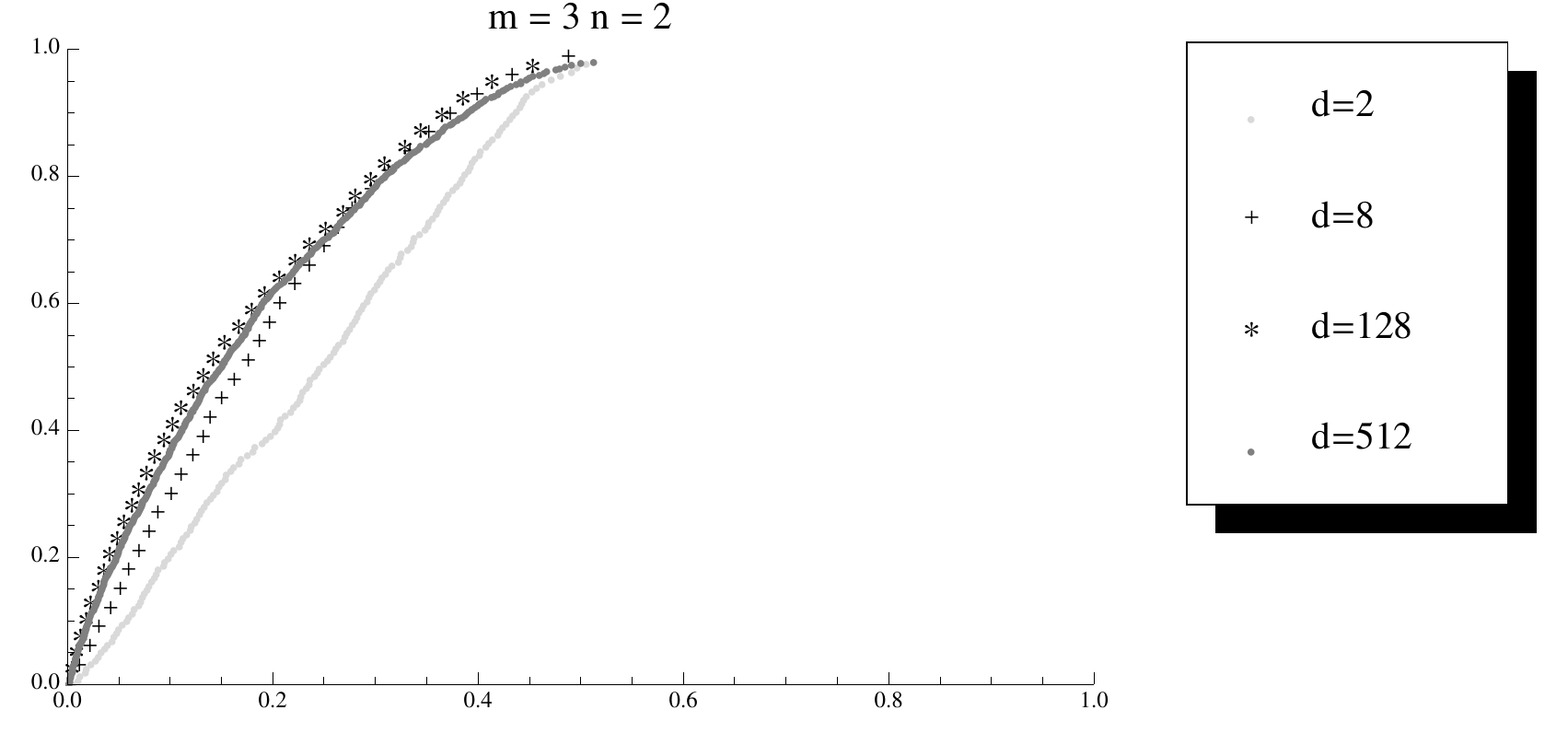}
\caption{The distribution function for $\Theta$ from ILLL with $m=3$ and $n=2$ for $d=2,8,128$ and $512$.}
\label{im: theta m3n2}
\end{figure}

In Figure~\ref{im: theta multi } we show some distributions for cases where either $m$ or $n$ is 1.

\begin{figure}[!ht]
\includegraphics[height=50mm]{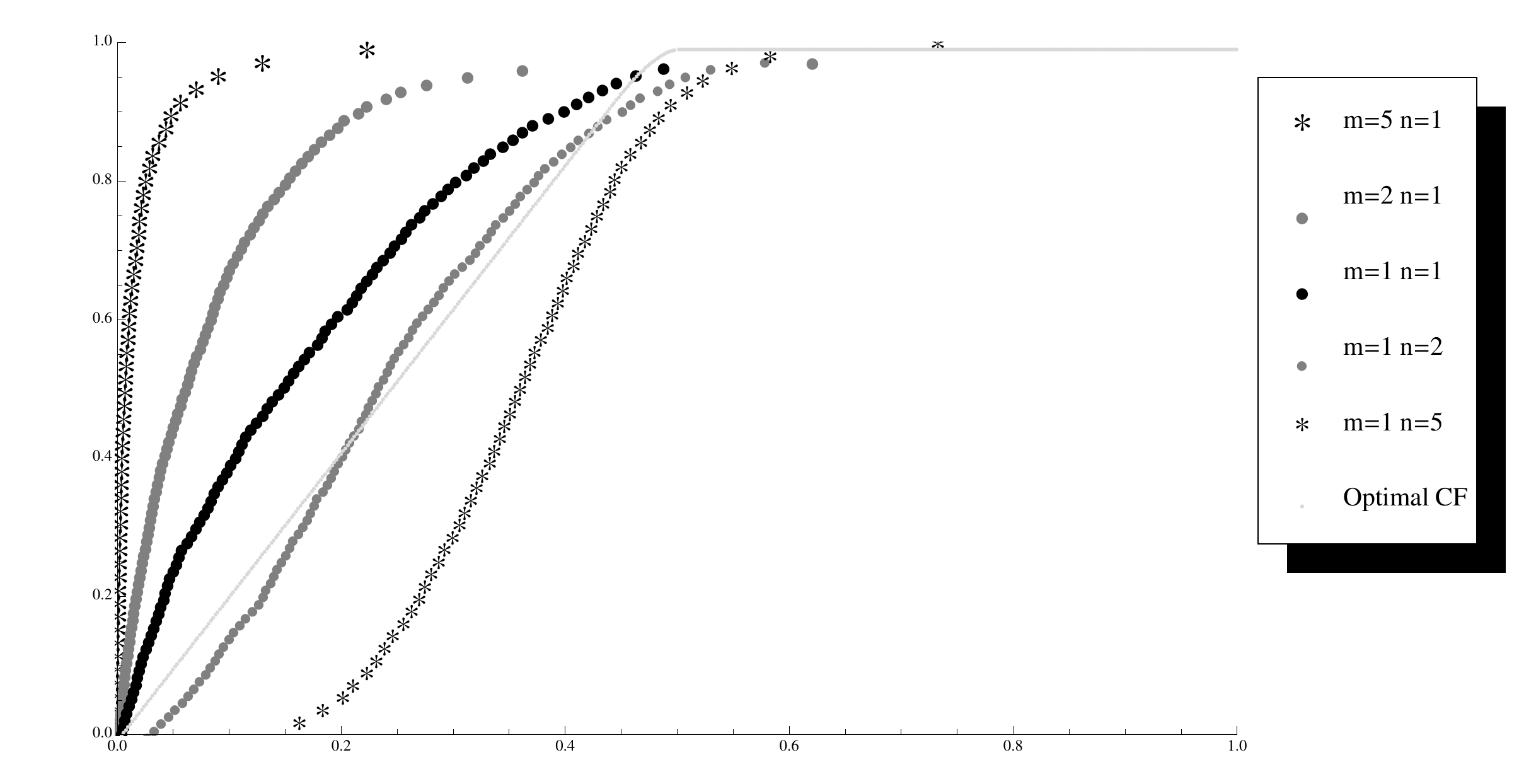}
\caption{The distribution for $\Theta$ from ILLL when either $m=1$ or $n=1$.}
\label{im: theta multi }
\end{figure}

In Figure~\ref{im: theta m=n} we show some distributions for cases where $m=n$.

\begin{figure}[!ht]
\includegraphics[height=50mm]{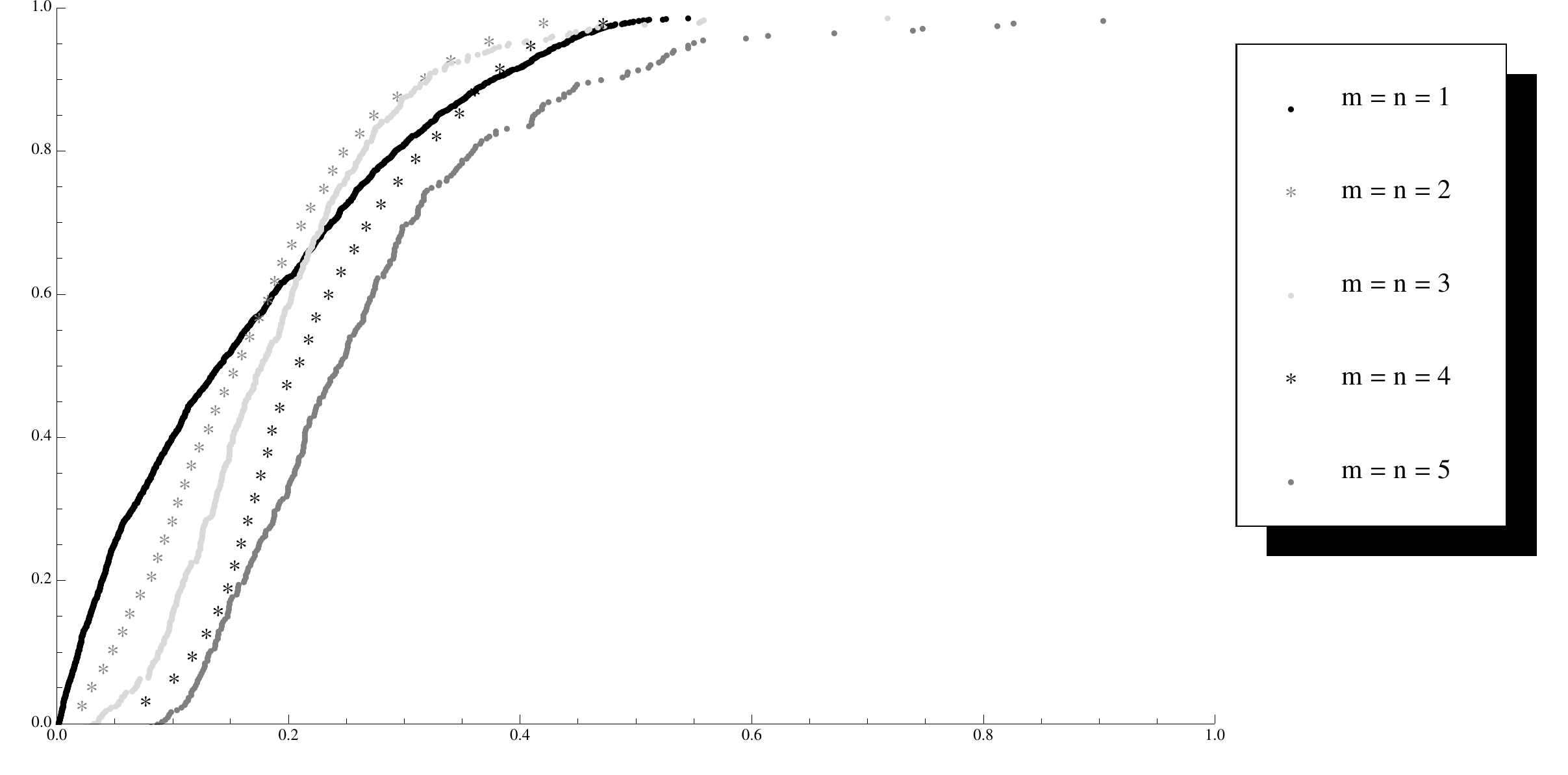}
\caption{The distribution of $\Theta$ from ILLL when $m=n$.}
\label{im: theta m=n}
\end{figure}

\begin{Rmk}
Very rarely the ILLL-algorithm returns an approximation with \mbox{$\Theta>1$}, but this is not visible in the images in this section.
 \end{Rmk}





\subsection{The denominators $q$}
For regular continued fractions, the denominators grow exponentially fast, to be more precise, for almost all $x$ we have that
$$
\lim_{k \rightarrow \infty} q_k^{1/k} = e^\frac{\pi^2}{12\log 2},  
$$ 
see Section~3.5 of~\cite{DK}.

For nearest integer continued fractions the constant $\frac{\pi^2}{12\log 2}$ is replaced by $\frac{\pi^2}{12\log G}$ with $G = \frac{\sqrt{5}+1}{2}$. For multi-dimensional continued fraction algorithms little is known about the distribution of the denominators $q_j$. Lagarias defined in~\cite{Lag1} the notion of a best simultaneous Diophantine approximation and showed that for the ordered denominators  $1=q_1 < q_2 <\dots $ of best approximations for $a_1,\dots,a_n$ it holds that
$$
\lim_{k \rightarrow \infty} \inf q_k^{1/k} \geq 1 + \frac{1}{2^{n+1}}. 
$$
 
\begin{figure}[!ht]
\includegraphics[height=100mm]{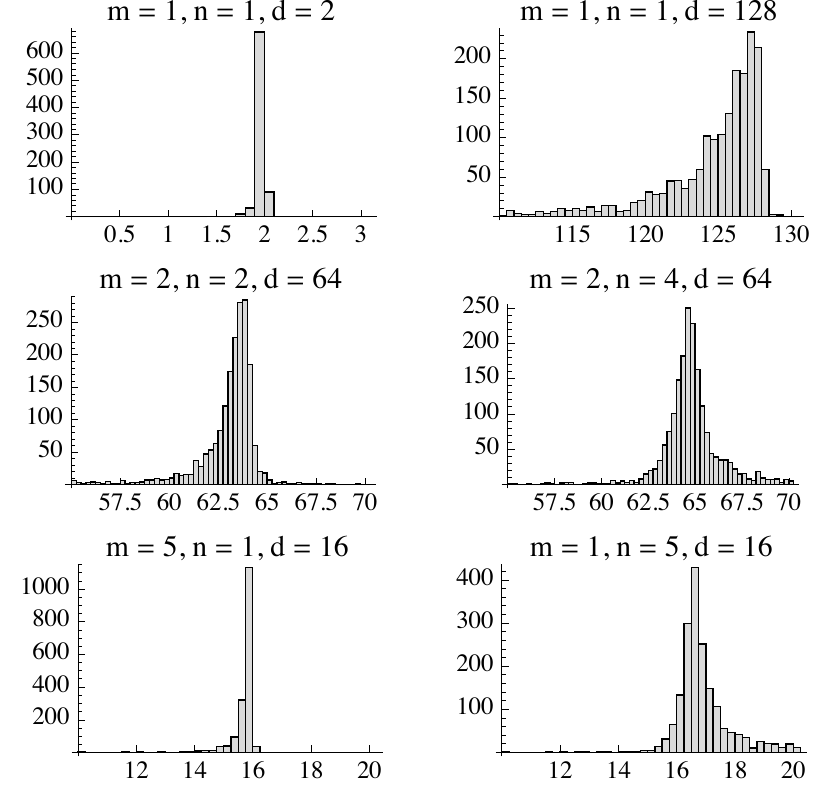}
\caption{Histograms of $e^{\frac{m\log q(k)}{k\,n}}$ for various values of $m,n$ and $d$. In these experiments we used $q_{\max}=10^{40}$ and repeated the ILLL-algorithm $\left\lfloor \frac{2000}{k^\prime}\right\rfloor$ times, with $k^\prime$ from Lemma~\ref{lemma: number steps }.}
\label{im: growth}
\end{figure}

We look at the growth of the denominators $q = \max_j  |q_j|$ that are found by the ILLL-algorithm. Dirichlet's Theorem~\ref{th: ILLL diri} suggests that if $q $ grows exponentially with a rate of $m/n$, then infinitely many approximation with Dirichlet coefficient smaller than 1 can be found. In the iterated LLL-algorithm it is guaranteed by~(\ref{eq: q k bound}) that $q(k)$ is smaller than a constant times $d^\frac{kn}{m}$. Our experiments indicate that $q(k)$ is about $ d^\frac{kn}{m}$, or equivalently that $ e^{\frac{m\log q_k}{k\,n}} $ is about $ d$; see Figure~\ref{im: growth} which gives a histogram of solutions that satisfy $ e^{\frac{m\log q_k}{k\,n}} = x$.

\renewcommand{\bibname}{References} 

\bibliography{bibionica}
\bibliographystyle{abbrv}

\end{document}